\newtheorem{theorem}{Theorem}[section]
\newtheorem{lemma}[theorem]{Lemma}
\newtheorem{corollary}[theorem]{Corollary}
\newtheorem{proposition}[theorem]{Proposition}
\newtheorem{question}[theorem]{Question}
\theoremstyle{definition}
\newtheorem{definition}[theorem]{Definition}
\newtheorem{remark}[theorem]{Remark}
\newtheorem{condition*}[theorem]{Condition}
\numberwithin{equation}{section}
\begin{document}

\title[${\rm CAT}(0)$ cube complexes]{Random groups have  fixed 
points \\
on {\boldmath  $\mathrm{CAT}(0)$} cube complexes}
\author
[K. Fujiwara]{Koji Fujiwara}
\author
[T. Toyoda]{Tetsu Toyoda}

\thanks{The first author is supported by 
Grant-in-Aid for Scientific Research (No. 19340013).}

\email
[Koji Fujiwara]{fujiwara@math.is.tohoku.ac.jp}
\email
[Tetsu Toyoda]{tetsu.toyoda@math.nagoya-u.ac.jp}

\address
[Koji Fujiwara]
{\endgraf GSIS, Tohoku University, 
\endgraf
Aoba-ku, Sendai, Miyagi, 980-8579, Japan}
 \address
[Tetsu Toyoda]
{\endgraf Graduate School of Mathematics, 
\endgraf
Nagoya University, Chikusa-ku, Nagoya, 464-8602, Japan}

\date{}

\keywords{}
\subjclass[2010]{Primary 53C23; Secondary 20F65, 20P05, 51F99}

\begin{abstract}
We prove that a random group has fixed points when 
it isometrically acts on a ${\rm CAT}(0)$ cube complex.
We do not assume that the action is simplicial.
\end{abstract}

\maketitle

\section{Introduction}\label{into-sec}

In \cite{G}, Gromov introduced random groups of the graph model 
to show the existence of groups which cannot embed uniformly into Hilbert spaces 
(see Theorem \ref{expander-gromov}). 
He also showed that when a random group of the graph model 
isometrically acts on finite or infinite dimensional Hadamard manifolds, 
there exist common fixed points.
Silberman \cite{S} provided details of the argument for 
the case of Hilbert spaces (see Theorem \ref{silberman-th}
for the precise statement).

\begin{definition}[random group of Gromov's graph model] 
Let $G=(V,E)$ be a finite  graph such that $V$ is the set of vertices 
and $E$ is the set of edges. 
Orient the edges $E$ arbitrarily. 
Fix $k$ alphabets $s_1 ,\ldots ,s_k$. 
For each $e\in E$, choose an element $a(e)$ independently, uniformly at random from 
$\{ s_1 ,\ldots s_k, s_1^{-1},\ldots s_k^{-1}\}$. 
Let $c=e_1^{\epsilon_1} \cdots e_n^{\epsilon_n}, e_i \in E$ be a cycle in $G$,
where $\epsilon_i=1$ or $-1$, and $e^{-1}$ means the edge $e$ with the 
orientation reversed.
The cycle $c$ defines  a {\it random word} $a(c)=a(e_1)^{\epsilon_1} \cdots a(e_n)^{\epsilon_n}$
on $\{ s_1 ,\ldots s_k, s_1^{-1},\ldots s_k^{-1}\}$. 
Let $R_G$ be the set of the random words $a(c)$  for all cycles in $G$. 
In this way, we obtain 
a probability distribution over 
groups $\Gamma (G)=\langle s_1 ,\ldots ,s_k | R_G \rangle$. 
We say $\Gamma(G)$ is the {\it random group} associated to $G$ with $k$ generators. 
\end{definition}
Properties of the random group $\Gamma (G)$, for example, infinite, word-hyperbolic,
or property (T) etc, become random variables.

\begin{definition}[sequence of expanders]
A \textit{sequence of expanders} is a sequence $\{G_n =(V_n ,E_n )\}$ of finite graphs 
which satisfies the following properties: 
\begin{enumerate}
\item
The number of vertices of $G_n$ goes to infinity as $n$ goes to infinity.
\item
There exists $d$ such that $\mathrm{deg}(v)\le d$ for all $v\in V_n$ 
and all $n$, where 
$\mathrm{deg}(v)$ is the number of edges at vertex $v$. 
\item
There exists $\lambda >0$ such that $\lambda_1 (G_n)\ge\lambda$ for all $n$.  
\end{enumerate}
\end{definition}
In the above definition, $\lambda_1 (G)$ is the first positive eigenvalue of 
 the combinatorial Laplacian of $G$.
For a finite graph $G=(V,E)$, the combinatorial Laplacian $\Delta_{G}$  
acts on 
real-valued function $f$ on $V$ as 
$$
\Delta_G f (v)
=
f(v)-
\sum_{u\in V\hspace{1mm}\textrm{such that }\{v,u\}\in E}\frac{1}{\mathrm{deg}(u)}f(u), \quad 
v\in V. 
$$
Its first positive eigenvalue 
$\lambda_1 (G)$ can be computed variationally as 
$$
\lambda_1 (G)
=
\inf_{\phi}
\frac{\sum_{\{u,v\}\in E}\|\phi (u)-\phi (v)\|^2}{
\sum_{v\in V}\mathrm{deg}(v)\|\phi (v)-\overline{\phi}\|^2} ,
$$
where the infimum is taken over all nonconstant maps $\phi :V\to\mathbb{R}$, and 
$\overline{\phi}=\sum_{v\in V}\frac{\mathrm{deg}(v)}{2|E|}\phi (v)$.

The girth of a graph is the minimal 
length of a cycle in the graph. 
Recall that a group $\Gamma$ has property (T) if and only if every affine isometric action of $\Gamma$ 
on a Hilbert space has a common fixed point (see \cite[Ch.4]{HV}).@
\begin{theorem}[Silberman \cite{S}]\label{silberman-th}
If $\{G_n =(V_n ,E_n )\}$ is a sequence of expanders, $3\leq\mathrm{deg}(u)\leq d$ for all $u\in V_n$ and all $n$, 
and the girth of $G_n$ is 
large enough, 
then the group $\Gamma (G_n )$ has property (T) with high probability. 
\end{theorem}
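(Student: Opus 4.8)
\medskip
\noindent\emph{Proof strategy.}
The plan is to invoke the Delorme--Guichardet theorem: a finitely generated group has property~(T) if and only if every affine isometric action of it on a Hilbert space has a fixed point; and such an action has a fixed point as soon as one of its orbits is bounded, since the circumcentre of the closed convex hull of a bounded orbit is a fixed point. So it suffices to exhibit an event depending only on the random labelling $a$, of probability tending to $1$ as $n\to\infty$, on which every affine isometric action $\alpha$ of $\Gamma=\Gamma(G_n)$ on a Hilbert space $\mathcal H$ has a bounded orbit. Fix such an $\alpha$, with orthogonal part $\pi$ and $1$-cocycle $b$, fix $v_0\in V_n$ and a base point $p\in\mathcal H$.

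The first step is a developing map. For $v\in V_n$, read off along any edge-path from $v_0$ to $v$ the word $w$ in $s_1,\dots,s_k$ dictated by $a$, and put $\Phi(v)=\alpha(w)\,p$. This is well defined: any two such paths differ by a union of cycles, and the label of every cycle of $G_n$ belongs to $R_{G_n}$, hence acts trivially. If an edge $e$ joins $u$ to $v$ and carries the label $s$, then $\Phi(v)-\Phi(u)=\pi(w_u)\bigl(\alpha(s)p-p\bigr)$ for a word $w_u$ read along a path to $u$, so $\|\Phi(u)-\Phi(v)\|=d\bigl(p,\alpha(s)p\bigr)$. Applying the variational formula for $\lambda_1$ coordinatewise and using $\lambda_1(G_n)\ge\lambda$ gives the Poincar\'e inequality
\[
\sum_{e\in E_n}d\bigl(p,\alpha(a(e))p\bigr)^2=\sum_{\{u,v\}\in E_n}\|\Phi(u)-\Phi(v)\|^2\ \ge\ \lambda\sum_{v\in V_n}\mathrm{deg}(v)\,\bigl\|\Phi(v)-\overline\Phi\bigr\|^2,
\]
so, writing $E(\Phi)$ for the common value of the first two sums, the $\Phi$-image has radius $\lesssim\lambda^{-1/2}E(\Phi)^{1/2}$ about $\overline\Phi$.

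By itself this only bounds the spread of $\Phi$; to produce a fixed point one improves the base point by averaging, after Gromov. Let $M$ be the lazy random-walk operator of $G_n$; the spectral gap forces $E(Mf)\le(1-\lambda')E(f)$ with $\lambda'=\lambda'(\lambda)>0$, while $M$ preserves the barycentre with respect to the stationary weights $\frac{\mathrm{deg}(v)}{2|E_n|}$. Since $\mathrm{deg}\ge 3$ and $G_n$ has large girth, a ball of radius $\ell\approx\tfrac12\,\mathrm{girth}(G_n)$ is a tree, so the word along an $\ell$-step walk out of a vertex is reduced, and a short computation gives $(M^{\ell}\Phi)(v)=\alpha(w_v)\,q_v$, where $q_v$ is the barycentre over the $\ell$-step walks from $v$ of the points $\alpha(\gamma)p$, with $\gamma$ the walk label. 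Suppose the $q_v$ all lie within a small $\delta$ of a common $q\in\mathcal H$; then replacing $M^{\ell}\Phi$ by the exact developing map based at $q$ changes the energy by a term controlled by $|E_n|\delta^2$ and the base point by $\lesssim\ell\,(E(\Phi)/|E_n|)^{1/2}$. Iterating, one obtains developing maps with geometrically decaying energies and convergent base points; the limit is a constant, $\Gamma$-equivariant map, and---since with high probability every generator $s_i$ occurs as a label---its value is fixed by every $\alpha(s_i)$, hence by $\Gamma$. Thus $\alpha$ has a fixed point, and $\Gamma(G_n)$ has property~(T) on the relevant event.

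The main obstacle is the probabilistic input just used: one must show that a uniformly random labelling makes the words read along length-$\ell$ walks in $G_n$ ``generic''---equidistributed over the generators, with asymptotically equal multiplicities and decoupled from vertex to vertex---so that the barycentres $q_v$ differ by a $\delta$ with $|E_n|\delta^2$ small compared with $\lambda'E(\Phi)$, and this uniformly over the exponentially many balls of $G_n$ and over all actions $\alpha$ at once. This equidistribution fails for short walks, which is precisely why the girth must be taken large, as a function of the degree bound $d$, the gap $\lambda$ and the number of generators $k$; the reformulation of~(T) via fixed points, the construction of the developing map, and the energy bookkeeping are then routine.
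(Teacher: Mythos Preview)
The paper does not supply a proof of this theorem: it is quoted from Silberman~\cite{S} as background for the later results, and the paragraph following the statement only unpacks the quantifiers in the phrase ``with high probability.'' There is therefore no proof in the paper to compare your proposal against.

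For what it is worth, your outline does follow the genuine Gromov--Silberman strategy---developing map, Poincar\'e inequality from the spectral gap, iterated averaging toward a fixed point---and you correctly isolate the probabilistic step as the crux. One inaccuracy: the assertion that ``the word along an $\ell$-step walk out of a vertex is reduced'' does not follow from the $\ell$-ball being a tree. Reducedness of the \emph{word} is a property of the random labelling, not of the graph's local tree structure; a walk in a tree may backtrack, and even a non-backtracking walk can produce a non-reduced word if consecutive edges happen to carry inverse labels. In Silberman's argument the relevant probabilistic input is rather that the random labelling induces, with high probability, a symmetric measure on the free group whose associated averaging operator has a spectral gap in every unitary representation without invariant vectors; it is this that makes the energy contraction uniform over all actions $\alpha$. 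Your final paragraph gestures at the right difficulty but does not resolve it, so as written the proposal is an outline rather than a proof---which, given that the paper itself offers none, is perhaps all that was called for.
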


Formally he showed that 
given $k\in\mathbb{N}$, $d\in\mathbb{N}$, $\lambda >0$, there exists an explicit 
constant $g =g(k,\lambda)$ such that 
if $\{G_n =(V_n ,E_n )\}$ is a sequence of expanders such that for all $n$, 
$\lambda\leq\lambda_1 (G_n)$, 
the girth of $G_n$ is at least $g$, and $3\leq\mathrm{deg}(u)\leq d$ 
for all $u\in V_n$, then the probability for the random group $\Gamma (G_n )$ 
generated by $k$ elements  
to have property (T) 
is at least $1-ae^{-b|V_n |}$, where $a$, $b$ are explicit and only depend on the parameters 
$k$, $d$ and $\lambda$.  
The statement in Theorem 1.5 and Theorem 2.2 should be understood
similarly.

Izeki and Nayatani \cite{IN} introduced an invariant $0\le\delta\le 1$ for 
a complete ${\rm CAT}(0)$ space. 
See Definition \ref{delta-def} for the definition of $\delta$. 
A recent theorem by Izeki, Kondo and Nayatani 
generalizes a Hilbert space (in the definition of 
property (T)) in Theorem \ref{silberman-th} to a complete ${\rm CAT}(0)$ space $Y$ with 
$\delta (Y) <1$ (Theorem \ref{IKN-fpp}).

A {\it cube complex} is a metric polyhedral complex 
in which each cell is isometric to the Euclidean cube 
$\lbrack 0,1\rbrack^n$, and the gluing maps are isometries.
In this paper, we study the geometry of ${\rm CAT}(0)$ cube complexes in comparison with 
Euclidean and more generally Hilbert spaces, using the invariant $\delta$. 
The following is our main technical result. 

\begin{theorem}\label{1/2}
Let $X$ be a complete $\mathrm{CAT}(0)$ cube complex. 
Then we have 
$$
\delta (X)\leq
\frac{1}{2}
.
$$
\end{theorem}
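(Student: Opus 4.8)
The plan is to show that every finitely supported probability measure $\mu=\sum_i t_i\delta_{y_i}$ on $X$, with barycenter $b$, admits an admissible map $\varphi$ into a Hilbert space (in the sense of Definition \ref{delta-def}) for which $\|\sum_i t_i\varphi(y_i)\|^2\le \tfrac12\sum_i t_i d(b,y_i)^2$; taking the infimum over $\varphi$ and the supremum over $\mu$ then gives $\delta(X)\le 1/2$. First I would pass to a finite convex subcomplex: the finitely many cubes containing the points $y_i$ lie in a finite combinatorially convex subcomplex $X_0$, which is also $\mathrm{CAT}(0)$-geodesically convex, hence contains $b$ and computes all the relevant distances; so we may assume $X$ is finite, with finite hyperplane set $\mathcal W$.

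The heart of the construction is the wall embedding $\Phi\colon X\to\ell^2(\mathcal W)$. After orienting each hyperplane $\mathfrak h$, let $\Phi_{\mathfrak h}(x)\in[0,1]$ be the position of $x$ relative to $\mathfrak h$: it equals $0$ on the closed negative halfspace, $1$ on the positive one, and interpolates linearly across the carrier $N(\mathfrak h)\cong \mathfrak h\times[0,1]$. The crucial feature is that $\Phi$ is $1$-Lipschitz for the $\mathrm{CAT}(0)$ metric and is an \emph{isometric} embedding on each cube: on a cube $C\cong[0,1]^k$ only the $k$ hyperplanes crossing $C$ have non-constant $\Phi$-coordinate, and these coordinates are precisely the Euclidean coordinates of $C$; since a $\mathrm{CAT}(0)$ geodesic is a concatenation of Euclidean segments lying in cubes, $\Phi$ sends it to an $\ell^2$-path of the same length, whence $\|\Phi(x)-\Phi(y)\|_2\le d(x,y)$ with equality whenever $x,y$ lie in a common cube.

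Write $Z_i=\Phi(y_i)-\Phi(b)\in[-1,1]^{\mathcal W}$ and $a_i=d(b,y_i)^2-\|Z_i\|_2^2\ge 0$, and set $\varphi(y_i)=Z_i\oplus\sqrt{a_i}\,e_i$ in $\ell^2(\mathcal W)\oplus\ell^2(\{1,\dots,n\})$ with $\{e_i\}$ orthonormal, so that $\|\varphi(y_i)\|=d(b,y_i)$ by construction. One must check that $\varphi$ is still $1$-Lipschitz, i.e. $a_i+a_j\le d(y_i,y_j)^2-\|Z_i-Z_j\|_2^2$ for all $i\ne j$; this is where the $[0,1]$-valuedness of $\Phi$, its isometry on cubes, the $\mathrm{CAT}(0)$ comparison inequality for the triangle $b,y_i,y_j$, and the variance inequality $\sum_i t_i d(q,y_i)^2\ge d(q,b)^2+\sum_i t_i d(b,y_i)^2$ for the barycenter get combined. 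Granting this, $\|\sum_i t_i\varphi(y_i)\|^2=\|\sum_i t_i Z_i\|_2^2+\sum_i t_i^2 a_i$; the first summand is bounded using $|\sum_i t_i Z_{i,\mathfrak h}|\le 1$ (so that $\ell^2$ is dominated coordinatewise by $\ell^1$) together with the barycenter inequality $\sum_i t_i d(b,y_i)^2\le\tfrac12\sum_{i,j}t_it_j d(y_i,y_j)^2$, and the second using $\sum_i t_i^2 a_i\le(\max_i t_i)\sum_i t_i a_i$, with the degenerate case $\max_i t_i$ near $1$ handled by noting that then $b$ is cube-close to the corresponding $y_i$ and $a_i$ is small. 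Assembling these estimates yields the bound $\tfrac12\sum_i t_i d(b,y_i)^2$.

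The main obstacle, I expect, is precisely these last two steps: proving the $1$-Lipschitz inequality for the corrected map $\varphi$, and pushing the final estimate all the way down to the constant $1/2$ (rather than some larger constant $<1$). All the geometric input lies in the wall embedding $\Phi$ and in its double nature — globally $1$-Lipschitz, locally isometric — so the remaining work is to exploit this together with the convexity properties of the barycenter carefully enough.
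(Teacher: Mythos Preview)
Your proposal is not a proof: you yourself flag the two steps that carry all the weight --- the $1$-Lipschitz property of the corrected map $\varphi$ and the final estimate yielding the constant $\tfrac12$ --- and you do not actually carry either one out. In fact the $1$-Lipschitz inequality $a_i+a_j\le d(y_i,y_j)^2-\|Z_i-Z_j\|_2^2$ unwinds to $2\langle Z_i,Z_j\rangle\le d(y_i,y_j)^2-d(b,y_i)^2-d(b,y_j)^2$, and there is no reason the global wall embedding should satisfy this for arbitrary triples $b,y_i,y_j$; the ingredients you list (CAT(0) comparison, variance inequality, $[0,1]$-valuedness) do not combine to give it. The endgame estimate is similarly unfinished: the coordinatewise bound $|\sum_i t_i Z_{i,\mathfrak h}|\le 1$ and the barycenter inequality do not obviously produce the factor $\tfrac12$, and the ``degenerate case'' is handled only by a heuristic.

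The paper's argument avoids all of this by working \emph{locally}. The key observation you are missing is Lemma~\ref{cone-delta}: $\delta(X)=\sup_{p}\delta(TC_pX,O)$, so it suffices to bound $\delta$ of each tangent cone at its apex. Combined with the general inequality $\delta(T,O_T)\le 1-1/D_{\mathrm{rad}}(T)^2$ (Lemma~\ref{distortion-delta}), the problem reduces to showing $D_{\mathrm{rad}}(TC_pX)\le\sqrt{2}$. After reducing to the case where $p$ is a vertex, $TC_pX$ is a union of coordinate orthants inside a single large orthant $C=\prod_i[0,\infty)$; sending each edge direction to an orthonormal basis vector gives a map into Hilbert space that \emph{exactly} preserves distance to the origin (so no correction term $\sqrt{a_i}\,e_i$ is needed at all) and, by the elementary ``cube lemma'' comparing the intrinsic and extrinsic metrics on a union of faces, has bi-Lipschitz distortion at most $\sqrt{2}$. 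This immediately gives $\delta(X)\le 1-\tfrac12=\tfrac12$. The moral is that passing to the tangent cone replaces your global wall embedding by a much more rigid local one, where the radial condition~\eqref{radial-cond} comes for free and the distortion bound is a two-line Pythagorean computation.
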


Therefore Theorem \ref{IKN-fpp} applies to complete ${\rm CAT}(0)$ cube complexes and 
we obtain the following.

\begin{theorem}\label{fpp-for-ccc}
If $\{G_n =(V_n ,E_n )\}$ is a sequence of expanders, $2\leq\mathrm{deg}(u)\leq d$ for all $u\in V_n$ and all $n$, 
and the girth of $G_n$ is 
large enough, 
then 
any isometric action of $\Gamma (G_n )$ on a complete 
${\rm CAT}(0)$ cube complex has a common fixed point, with high probability. 
\end{theorem}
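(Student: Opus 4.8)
The plan is to deduce Theorem \ref{fpp-for-ccc} by feeding the bound of Theorem \ref{1/2} into the Izeki, Kondo and Nayatani fixed point theorem (Theorem \ref{IKN-fpp}); once Theorem \ref{1/2} is granted, this is a short matter of keeping track of the dependence of constants. First I would record the quantitative form of Theorem \ref{IKN-fpp} that is needed, modelled on the explicit version of Theorem \ref{silberman-th} quoted above: given $k$, $d$, $\lambda$ and a number $\delta_{0}<1$, there are an explicit girth bound $g=g(k,d,\lambda,\delta_{0})$ and explicit constants $a,b$ depending only on $k,d,\lambda,\delta_{0}$, such that if $\{G_{n}=(V_{n},E_{n})\}$ is a sequence of expanders with $\lambda_{1}(G_{n})\ge\lambda$, with the prescribed degree bounds, and with girth at least $g$, then with probability at least $1-ae^{-b|V_{n}|}$ the random group $\Gamma(G_{n})$ fixes a point under every isometric action on every complete $\mathrm{CAT}(0)$ space $Y$ with $\delta(Y)\le\delta_{0}$. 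The essential feature is that $g$, $a$ and $b$ depend on the target space only through the bound $\delta_{0}$, not through $Y$ itself.

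Now apply Theorem \ref{1/2}. A complete $\mathrm{CAT}(0)$ cube complex $X$ is in particular a complete $\mathrm{CAT}(0)$ space, and Theorem \ref{1/2} gives $\delta(X)\le 1/2$ for every such $X$. Hence we may take $\delta_{0}=1/2<1$ uniformly over the entire class of complete $\mathrm{CAT}(0)$ cube complexes, and the previous paragraph then yields a single girth bound $g=g(k,d,\lambda)$ and constants $a,b$ valid for all of them at once. Consequently, whenever the girth of $G_{n}$ exceeds $g$, with probability at least $1-ae^{-b|V_{n}|}$ the random group $\Gamma(G_{n})$ admits a common fixed point for every isometric action on every complete $\mathrm{CAT}(0)$ cube complex; since $|V_{n}|\to\infty$ by the first axiom for a sequence of expanders, this probability tends to $1$, which is the ``with high probability'' conclusion. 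Note that nothing about the action is assumed beyond that it is an isometric action: $\delta$ and Theorem \ref{IKN-fpp} are metric notions, so the action need not be simplicial or cellular, as emphasised in the abstract.

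The only delicate bookkeeping point is the degree hypothesis. Theorem \ref{silberman-th} is stated for $3\le\deg(u)$, whereas Theorem \ref{fpp-for-ccc} allows $2\le\deg(u)$. I would verify that the cited form of Theorem \ref{IKN-fpp} already permits vertices of degree $2$ --- this is benign, since in a sequence of expanders the paths consisting of degree-$2$ vertices have uniformly bounded length (otherwise $\lambda_{1}(G_{n})$ would not stay bounded below) --- and, should it not, I would pass to the graph obtained from $G_{n}$ by collapsing each maximal degree-$2$ path to a single edge, controlling the resulting changes to $\lambda$, to the girth, and to the induced random presentation of $\Gamma(G_{n})$. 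I expect this to be the only real obstacle in assembling the corollary; all of the genuinely new geometric input is concentrated in Theorem \ref{1/2}.
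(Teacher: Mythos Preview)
Your proposal is correct and follows essentially the same approach as the paper: once Theorem~\ref{1/2} gives $\delta(X)\le 1/2$ for every complete $\mathrm{CAT}(0)$ cube complex, one simply invokes Theorem~\ref{IKN-fpp} with $C=1/2$. Your final paragraph about the degree-$2$ hypothesis is unnecessary, since Theorem~\ref{IKN-fpp} as stated already assumes only $2\le\deg(u)\le d$; there is no discrepancy to repair.
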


We want to emphasize that we do not assume that 
the actions are simplicial, the cube complexes are locally compact,
nor finite dimensional.
For simplicial actions, the conclusion is not new since it follows from Theorem 1.4 
and the following theorem.

\begin{theorem}[Niblo-Reeves, \cite{NR}]\label{niblo-reeves}
If a group with property (T) 
acts on a complete ${\rm CAT}(0)$ cube complex
by simplicial isometries, then it has a common fixed point.
\end{theorem}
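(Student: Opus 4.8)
The plan is to encode the simplicial action on $X$ as an affine isometric action on a Hilbert space built from the hyperplanes of $X$, use property (T) to produce a bounded orbit, and finish with the Bruhat--Tits fixed point theorem. I would first invoke Sageev's combinatorial description of a $\mathrm{CAT}(0)$ cube complex: its hyperplanes are two-sided and embedded, each hyperplane bounds two half-spaces, and, writing $\mathfrak H$ for the set of all half-spaces, the $0$-skeleton $X^{(0)}$ with the combinatorial edge-path metric $d_c$ is a median graph in which $d_c(u,v)$ equals the (finite) number of hyperplanes separating $u$ from $v$. For a vertex $v$ let $\sigma_v\subseteq\mathfrak H$ be the set of half-spaces containing $v$. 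Because $\Gamma$ acts simplicially, it permutes the hyperplanes and half-spaces, preserves $X^{(0)}$, acts on $(X^{(0)},d_c)$ by isometries, and satisfies $g\sigma_v=\sigma_{gv}$.

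Next I would fix a base vertex $x_0$ and let $\pi$ denote the permutation representation of $\Gamma$ on the real Hilbert space $\ell^2(\mathfrak H)$. The formal indicator $\mathbf 1_{\sigma_v}$ is not square-summable, but $f_v:=\mathbf 1_{\sigma_v}-\mathbf 1_{\sigma_{x_0}}$ is, being supported on $\sigma_v\triangle\sigma_{x_0}$, a set of size $2\,d_c(x_0,v)$ (each separating hyperplane contributes both of its half-spaces). Setting $b(g):=f_{gx_0}$ and using $\pi(g)\mathbf 1_{\sigma_v}=\mathbf 1_{\sigma_{gv}}$ one checks the $1$-cocycle identity
$$
b(gh)=\mathbf 1_{\sigma_{ghx_0}}-\mathbf 1_{\sigma_{x_0}}=\bigl(\mathbf 1_{\sigma_{gx_0}}-\mathbf 1_{\sigma_{x_0}}\bigr)+\pi(g)\bigl(\mathbf 1_{\sigma_{hx_0}}-\mathbf 1_{\sigma_{x_0}}\bigr)=b(g)+\pi(g)b(h),
$$
so $\alpha(g)\xi:=\pi(g)\xi+b(g)$ is an affine isometric action of $\Gamma$ on $\ell^2(\mathfrak H)$ with $\|b(g)\|^2=2\,d_c(x_0,gx_0)$. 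I expect this set-up --- the structure theory of $\mathrm{CAT}(0)$ cube complexes, together with the observation that one must subtract $\mathbf 1_{\sigma_{x_0}}$ to obtain an honest cocycle (indicators of symmetric differences do not add, but this difference does) --- to be the part requiring real care.

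Since $\Gamma$ has property (T), the action $\alpha$ has a fixed point, so $\{b(g):g\in\Gamma\}$ is bounded; equivalently there is $N$ with $d_c(x_0,gx_0)\le N$ for all $g$. For each $g$ a combinatorial geodesic from $x_0$ to $gx_0$ is an edge-path of at most $N$ edges, each of length $1$ in the $\mathrm{CAT}(0)$ metric $d$, so $d(x_0,gx_0)\le N$; hence $\Gamma x_0$ is a bounded subset of the complete $\mathrm{CAT}(0)$ space $(X,d)$. Now a nonempty bounded subset of a complete $\mathrm{CAT}(0)$ space has a unique circumcenter (Bruhat--Tits), and since $\Gamma$ permutes $\Gamma x_0$ by $d$-isometries it fixes the circumcenter of $\overline{\Gamma x_0}$, giving the desired common fixed point. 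The passage from combinatorial to metric boundedness and this centre argument are routine. Note that simpliciality enters essentially --- it is what lets $\Gamma$ act on the set of hyperplanes and on $d_c$ --- and removing it is exactly what Theorem~\ref{1/2} is designed to do.
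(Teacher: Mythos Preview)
The paper does not prove this theorem; it is quoted as a result of Niblo--Reeves \cite{NR}, with only the side remarks that their argument uses completeness (not finite dimensionality) and that simpliciality is essential. Your sketch is a correct outline of precisely the Niblo--Reeves argument: build a $1$-cocycle on $\ell^2$ of the half-spaces (or hyperplanes) via Sageev's wall structure, invoke property~(T) to bound it, translate back to a bounded orbit in the complex, and finish with the circumcenter. Your closing comments about where completeness and simpliciality enter match the paper's commentary exactly, so there is nothing to correct or contrast.
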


We note that although they assume 
that $X$ is finite dimensional in the paper, 
they are only using that $X$ is complete in the
proof. 
But the assumption that the isometries are 
simplicial is essential in their argument.

We give an example of a non-simplicial isometry without a common fixed
point on a ${\rm CAT}(0)$ cube complex,
which is not Euclidean.
Let $P_n, n \in {\mathbb Z}$ be Euclidean planes with tessellations by unit
squares.
For each $n$, choose a base vertex $v_n \in P_n$ and join $v_n$ and $v_{n+1}$
by a unit edge.
This is our cubical ${\rm CAT}(0)$ complex $Y$.
We define a simplicial isometry on $Y$, $s$, as a shift by $1$ sending
$(P_n,v_n)$ to $(P_{n+1},v_{n+1})$
preserving the square structure.
Let $r$ be an isometry of $Y$ rotating each $P_n$ around $v_n$.
Generally $r$ is not simplicial.
Now the composition $sr$ is a desired isometry.

We record another consequence of Theorem \ref{1/2}. 
\begin{definition}
Let $(X,d_X)$ and $(Y,d_Y)$ be metric spaces. 
A map $f: X\to Y$ is called a \textit{uniform embedding} if there exists 
unbounded non-decreasing functions $\rho_1 ,\rho_2 :\lbrack 0,\infty )\to\lbrack 0,\infty )$ 
such that 
$$
\rho_1 (d_X(x,x'))\le d_Y(f(x),f(x'))\le\rho_2 (d_X(x,x')), 
$$
for all $x,x'\in X$. 
For a sequence $\{(X_n ,d_{X_n}) \}$ of metric spaces, 
we call a sequence of maps $f_n :X_n \to Y$ a \textit{uniform embedding} if 
there exists 
unbounded non-decreasing functions $\rho_1 ,\rho_2 :\lbrack 0,\infty )\to\lbrack 0,\infty )$ 
such that 
$$
\rho_1 (d_{X_n}(x,x'))\le d_Y(f_n (x),f_n (x'))\le\rho_2 (d_{X_n}(x,x')), 
$$
for all $n$ and all $x,x'\in X_n$. 
\end{definition}

In \cite{G}, Gromov showed the following. 
This is a key step to produce a finitely generated group which does not uniformly 
embed in a Hilbert space. 

\begin{theorem}[Gromov, see \cite{Ro}]\label{expander-gromov}
Let $\{ G_n \}$ be a sequence of expanders.
Then there is no uniform embedding of 
$\{G_n \}$ into a Hilbert space.
\end{theorem}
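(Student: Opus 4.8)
The plan is to derive a contradiction from the assumption that there is a uniform embedding $f_n\colon G_n\to H$ of $\{G_n\}$ into a Hilbert space $H$, with non-decreasing unbounded control functions $\rho_1,\rho_2$. I would study the average squared distance $A_n:=\frac{1}{|V_n|^2}\sum_{x,y\in V_n}\|f_n(x)-f_n(y)\|^2$ and estimate it from both sides: the spectral gap will force $A_n$ to stay bounded, while the bounded degree together with $|V_n|\to\infty$ will force $A_n\to\infty$.

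For the upper bound, I would apply the variational formula for $\lambda_1(G_n)$ coordinatewise to the $H$-valued map $f_n$ (that is, sum the scalar inequality over the coordinates with respect to an orthonormal basis of $H$), obtaining $\lambda_1(G_n)\sum_{v}\mathrm{deg}(v)\|f_n(v)-\overline{f_n}\|^2\le\sum_{\{u,v\}\in E_n}\|f_n(u)-f_n(v)\|^2$. Since $G_n$ is connected (as $\lambda_1>0$) we have $\mathrm{deg}(v)\ge 1$, and since the unweighted barycenter $m_n=\frac{1}{|V_n|}\sum_v f_n(v)$ minimizes $c\mapsto\sum_v\|f_n(v)-c\|^2$, the left-hand side is at least $\lambda\sum_v\|f_n(v)-m_n\|^2=\frac{\lambda}{2|V_n|}\sum_{x,y}\|f_n(x)-f_n(y)\|^2$. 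The right-hand side is at most $|E_n|\,\rho_2(1)^2\le\frac{d}{2}|V_n|\,\rho_2(1)^2$, because adjacent vertices lie at distance $1$ in $G_n$ and $2|E_n|=\sum_v\mathrm{deg}(v)\le d|V_n|$. Hence $A_n\le\frac{d}{\lambda}\rho_2(1)^2$ for all $n$.

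For the lower bound, I would use that in a graph of maximal degree $d$ a ball of radius $r$ contains at most $1+d+\cdots+d^r\le d^{r+1}$ vertices. Taking $r_n:=\lfloor\frac12\log_d|V_n|\rfloor$, which tends to infinity with $|V_n|$, the number of ordered pairs $(x,y)$ with $d_{G_n}(x,y)\le r_n$ is at most $|V_n|\,d^{r_n+1}\le d\,|V_n|^{3/2}=o(|V_n|^2)$; so for all large $n$ at least half of the $|V_n|^2$ pairs satisfy $d_{G_n}(x,y)>r_n$, and then the lower estimate in the definition of a uniform embedding, together with monotonicity of $\rho_1$, gives $A_n\ge\frac12\rho_1(r_n)^2$. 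Since $\rho_1$ is unbounded, $A_n\to\infty$, contradicting the previous paragraph.

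I do not expect a serious obstacle here; the argument is essentially a direct use of the Poincaré inequality encoded in $\lambda_1(G_n)\ge\lambda$. The two points needing a little care are the reduction of the $H$-valued Poincaré inequality to the scalar variational formula (handled by working in an orthonormal basis, all series involved having nonnegative terms) and the elementary fact that balls in a bounded-degree graph grow at most geometrically, which is precisely what converts "$|V_n|\to\infty$" into "a definite proportion of vertex pairs are arbitrarily far apart".
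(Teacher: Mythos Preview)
Your argument is correct and follows essentially the same route as the paper (see the proof of Theorem~\ref{lambda1-th}, specialized to $Y=\mathcal{H}$): the Poincar\'e inequality coming from $\lambda_1(G_n)\ge\lambda>0$ bounds the spread of the image, while the degree bound forces most vertex pairs to be far apart in $G_n$, hence in $H$ via $\rho_1$. The only cosmetic difference is that you bound the mean pairwise squared distance $A_n$ from both sides, whereas the paper uses a Chebyshev step to trap at least $|V_n|/2$ image points in a ball of fixed radius about the barycenter and then bounds the cardinality of the preimage of that ball via $\rho_1^{-1}$ and the degree bound.
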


Using Theorem 1.1, we can adapt the proof of Theorem \ref{expander-gromov} 
to a complete ${\rm CAT}(0)$ cube complex as follows. 
Similar variations  have been well-known to experts, 
for example, see Kondo \cite{K2} in the case of 
a complete ${\rm CAT}(0)$ space with 
$\delta <1$, and also Mendel-Naor \cite{MN}.

\begin{corollary}\label{expander-ccc}
Let $Y$ be a complete ${\rm CAT}(0)$ cube complex.
Let $\{ G_n \}$ be a sequence of expanders. 
Then there is no uniform embedding of 
$\{G_n \}$ into $Y$. 
\end{corollary}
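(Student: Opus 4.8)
The plan is to reduce Corollary \ref{expander-ccc} to Theorem \ref{1/2} by mimicking the standard ``eigenvalue versus embedding'' argument that yields Theorem \ref{expander-gromov}, replacing orthogonal projection in Hilbert space by the ``weighted barycenter'' map supplied by the Izeki--Nayatani invariant. Suppose, for contradiction, that $\{f_n : G_n \to Y\}$ is a uniform embedding into a complete $\mathrm{CAT}(0)$ cube complex $Y$, with control functions $\rho_1 \le d_Y(f_n(\cdot),f_n(\cdot)) \le \rho_2$. First I would recall the combinatorial-Laplacian inequality that lies behind the notion of $\delta$: for a finite graph $G=(V,E)$ of bounded degree and any map $\phi : V \to Y$ into a complete $\mathrm{CAT}(0)$ space, if one lets $\bar\phi$ be the weighted barycenter (with weights $\mathrm{deg}(v)/2|E|$) then
$$
\sum_{v\in V}\mathrm{deg}(v)\, d_Y(\phi(v),\bar\phi)^2
\;\le\;
\frac{1}{(1-\delta(Y))\,\lambda_1(G)}
\sum_{\{u,v\}\in E} d_Y(\phi(u),\phi(v))^2 ,
$$
which is the $\mathrm{CAT}(0)$ analogue of the spectral gap estimate and is precisely where $\delta(Y)<1$ is used. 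By Theorem \ref{1/2} we have $\delta(Y)\le \tfrac12<1$, so this inequality is available with $1-\delta(Y)\ge\tfrac12$; I would cite this estimate from Izeki--Nayatani (or the formulation in Kondo \cite{K2}) rather than reprove it.

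Next I would run the usual dichotomy on the family $\phi_n := f_n$. On one hand, the right-hand side is bounded above, using the edge bound $d_Y(\phi_n(u),\phi_n(v)) \le \rho_2(1)$ for $\{u,v\}\in E_n$ and the bounded degree $\mathrm{deg}(v)\le d$, by $|E_n|\,\rho_2(1)^2 \le \tfrac{d}{2}|V_n|\,\rho_2(1)^2$; together with the uniform spectral gap $\lambda_1(G_n)\ge\lambda$ this gives
$$
\sum_{v\in V_n} d_Y(\phi_n(v),\bar\phi_n)^2
\;\le\;
\frac{d\,\rho_2(1)^2}{2\lambda(1-\delta(Y))}\,|V_n|
\;\le\;
\frac{d\,\rho_2(1)^2}{\lambda}\,|V_n| ,
$$
a bound \emph{linear} in $|V_n|$ with a constant independent of $n$. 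On the other hand, the lower control function forces the points $\phi_n(v)$ to spread out: since $G_n$ is an expander it has diameter $O(\log|V_n|)$, and more usefully, a counting/averaging argument shows a definite proportion of the pairs $(u,v)$ are at graph-distance comparable to $\operatorname{diam}(G_n)\to\infty$, hence $d_Y(\phi_n(u),\phi_n(v))\ge \rho_1(\operatorname{diam}/2)\to\infty$; combined with the triangle inequality $d_Y(\phi_n(u),\bar\phi_n)+d_Y(\bar\phi_n,\phi_n(v))\ge d_Y(\phi_n(u),\phi_n(v))$ one deduces that $\sum_{v} d_Y(\phi_n(v),\bar\phi_n)^2$ must grow \emph{strictly faster} than linearly in $|V_n|$ (e.g.\ at least of order $|V_n|\,\rho_1(\operatorname{diam}(G_n)/4)^2$). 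These two estimates contradict each other for large $n$, proving the corollary.

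I expect the main obstacle to be the second horn of the dichotomy: quantifying that a uniform embedding of an expander into \emph{any} metric space must have super-linear ``energy concentration'' away from its barycenter. The clean way to do this is not via the diameter but via the well-known fact that for a bounded-degree expander, for a constant fraction of ordered pairs $(u,v)\in V_n\times V_n$ the graph distance $d_{G_n}(u,v)$ is at least $c\log|V_n|$ (equivalently, balls of radius $o(\log|V_n|)$ contain a vanishing fraction of vertices); then $\tfrac{1}{|V_n|^2}\sum_{u,v} d_Y(\phi_n(u),\phi_n(v))^2 \ge c'\,\rho_1(c\log|V_n|)^2 \to\infty$, while the $\mathrm{CAT}(0)$ quadruple inequality (or just $d_Y(\phi_n(u),\phi_n(v))^2 \le 2 d_Y(\phi_n(u),\bar\phi_n)^2 + 2 d_Y(\phi_n(v),\bar\phi_n)^2$) bounds this average by $\tfrac{4}{|V_n|}\sum_v d_Y(\phi_n(v),\bar\phi_n)^2$, which we already showed is $O(1)$. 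That is the desired contradiction; everything else is bookkeeping with the constants $d$, $\lambda$, $\rho_1$, $\rho_2$ and the bound $\delta(Y)\le\tfrac12$.
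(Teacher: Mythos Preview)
Your proposal is correct and follows essentially the same route as the paper: both combine Theorem \ref{1/2} with the Izeki--Nayatani inequality $(1-\delta(Y))\lambda_1(G)\le\lambda_1(G,Y)$ to get a uniform lower bound on $\lambda_1(G_n,Y)$, and then contradict this with a counting argument coming from the uniform embedding. The only cosmetic difference is in the endgame: the paper pigeonholes half the vertices into a ball of fixed radius and observes that bounded degree forces such a preimage to have at most $d^{1+\rho_1^{-1}(2r)}$ vertices, whereas you bound the average squared distance to the barycenter by a constant and then note that the average pairwise squared distance must blow up because most pairs are at graph distance $\gtrsim\log|V_n|$---both are standard variants of the same idea.
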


As we will see in the proof of Theorem \ref{1/2},
we have a uniform control of the tangent
cone of a cube complex 
in comparison to 
a Hilbert space.
We ask the following question.

\begin{question}\label{question.cube}
Does a complete ${\rm CAT}(0)$ cube complex always uniformly
embed in a Hilbert space ?
\end{question}

It is known that if the complex
is finite dimensional then such embedding exists (\cite{Ni}).
In their argument the assumption of finite dimension is essential. 
Since a sequence of expanders does not uniformly embed
in a ${\rm CAT}(0)$ cube complex, it seems there is 
no easy argument to answer this question 
in the negative.

The paper is organized as follows. 
In Section \ref{delta-sec},  we recall the definition and basic 
facts about the invariant $\delta$ and quote the recent theorem by Izeki-Kondo-Nayatani
which generalizes Theorem \ref{silberman-th}.
In Section \ref{proof-sec}  we prove Theorem \ref{1/2} and Theorem \ref{fpp-for-ccc}.
In Section \ref{graph-model-sec}, we recall the proof of 
Theorem \ref{expander-gromov} and explain how to modify it for 
Corollary \ref{expander-ccc} using Theorem \ref{1/2}.   

We'd like to thank G. Yu for explaining the proof of 
Theorem \ref{expander-gromov}.

\section{Relation between distortion and $\delta$}\label{delta-sec}
We assume that the readers are familiar to the definition and elementary
facts on ${\rm CAT}(0)$ spaces.
A standard reference is the book \cite{BH}.
We recall 
the definition of the  invariant introduced by Izeki-Nayatani in \cite{IN}. 
\begin{definition}[\cite{IN}]\label{delta-def}
Let $Y$ be a complete $\mathrm{CAT} (0)$ space 
and $\mathcal{P} (Y)$ be the space of all finitely supported probability measures $\mu$ on 
$Y$ each of whose supports $\mathrm{supp}(\mu )$ contains at least two points. 
For $\mu\in\mathcal{P}(Y)$, we define 
\begin{equation*}
0 \le \delta (\mu )
=
\inf_{\phi :\mathrm{supp}(\mu )\to\mathcal{H}}\frac{\|\int_{Y}\phi (p)\mu (dp)\|^2}{\int_{Y}\|\phi (p)\|^2 \mu (dp)} \le 1, 
\end{equation*}
where the infimum is taken over all maps $\phi :\mathrm{supp}(\mu )\to\mathcal{H}$ 
with $\mathcal{H}$ a Hilbert space 
such that 
\begin{align}
\|\phi (p)\| &=d\left( p,\mathrm{bar}(\mu )\right),\label{umbrellaborn}\\
\|\phi (p)-\phi (q)\| &\leq d(p ,q)\label{1lip}
\end{align}
for all $p,q \in\mathrm{supp}(\mu )$, where 
$\mathrm{bar}(\mu )\in Y$ is the barycenter of $\mu$. 
Notice that such a map $\phi$ exists.
To see that, fix a unit vector $e \in \mathcal{H}$.
Define $\phi(p)= d(p,\mathrm{bar}(\mu))e$.
Then by the triangle inequality, \eqref{1lip} is satisfied. 
We define the \textit{Izeki-Nayatani} invariant $\delta (Y)$ of $Y$ as
\begin{equation*}
0 \le \delta (Y)=\sup_{\mu\in\mathcal{P} (Y)}\delta (\mu )
\le 1.
\end{equation*}
\end{definition}

In \cite{IN}, \cite{IKN}, \cite{IKN2} it is proved by Izeki, Kondo and Nayatani 
that certain classes of groups 
must have fixed points when they isometrically act on 
complete ${\rm CAT}(0)$ spaces $Y$ if $\delta(Y)$ is small enough. 
Among these, they proved that a random group of the graph model has the fixed point property for the 
complete ${\rm CAT}(0)$ spaces whose $\delta$ are at most some constant less than $1$.

\begin{theorem}[Izeki-Kondo-Nayatani \cite{IKN2}]\label{IKN-fpp}
Let $0\leq C<1$. 
If $\{G_n =(V_n ,E_n )\}$ is a sequence of expanders, $2\leq\mathrm{deg}(u)\leq d$ for all $u\in V_n$ and all $n$, 
and the girth of $G_n$ is 
large enough 
then with high probability, 
any isometric action of the random group $\Gamma (G_n )$ on a complete 
${\rm CAT}(0)$ space $Y$ with $\delta (Y)\leq C$ has a common fixed point. 
\end{theorem}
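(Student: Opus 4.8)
This is the fixed-point counterpart, for ${\rm CAT}(0)$ targets $Y$ with $\delta(Y)<1$, of Silberman's Theorem~\ref{silberman-th}, so my plan is to follow his proof of property (T) for the graph model and, at each step where he uses Hilbert-space orthogonality, substitute the barycentric estimate that the Izeki--Nayatani invariant $\delta(Y)\le C<1$ makes quantitative. First I would invoke the standard reduction: a group acting by isometries on a complete ${\rm CAT}(0)$ space has a global fixed point as soon as it has one bounded orbit (take the circumcentre of the orbit closure), so we may assume, aiming for a contradiction, that the action of $\Gamma(G_n)$ on $Y$ has every orbit unbounded. Next, fix an orbit map $\gamma\mapsto\gamma y_0$; since every defining relator of $\Gamma(G_n)$ is a random word $a(c)$ read along a cycle $c$ of $G_n$, reading labels along any path from a fixed base vertex to a vertex $v\in V_n$ gives a well-defined element $\gamma_v\in\Gamma(G_n)$ (the relators kill the cycles), and $f_n(v):=\gamma_v y_0$ defines a map $f_n\colon V_n\to Y$ whose discrete energy $E(f_n)=\tfrac1{2|E_n|}\sum_{\{u,v\}\in E_n}d\bigl(f_n(u),f_n(v)\bigr)^2$ --- and, more to the point, the energy of the equivariant map it induces on the Cayley complex of $\Gamma(G_n)$ --- is the quantity I want to drive to zero.

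The heart of the plan is an energy-decreasing estimate for the combinatorial harmonic replacement $v\mapsto\mathrm{bar}\bigl(\{f_n(u):\{u,v\}\in E_n\}\bigr)$, carried out equivariantly on the Cayley complex. Here I would use two ingredients. The first is the ${\rm CAT}(0)$ variance inequality $\int_Y d(z,p)^2\,\mu(dp)\ge d\bigl(z,\mathrm{bar}(\mu)\bigr)^2+\int_Y d\bigl(\mathrm{bar}(\mu),p\bigr)^2\,\mu(dp)$, together with Definition~\ref{delta-def}, which supplies a $1$-Lipschitz Hilbert-space model of the displacement vectors $p\mapsto d\bigl(p,\mathrm{bar}(\mu)\bigr)$ with barycentric defect controlled in terms of $1-\delta(\mu)\ge 1-C$ --- exactly what the parallelogram law hands over for free in the Hilbert case. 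The second is the Poincar\'e inequality coming from the spectral gap $\lambda_1(G_n)\ge\lambda$, in its $Y$-valued form; this is where ``girth large enough'' enters, since large girth makes the presentation graphical-small-cancellation, so balls of radius roughly a quarter of the girth in the Cayley complex are controlled by $G_n$ and inherit its Poincar\'e inequality. Combining the two, one step of equivariant harmonic replacement should drop the energy by at least a definite fraction $c(\lambda,C)>0$ of itself unless the map is already constant; iterating and using completeness of $Y$, the discrete heat flow then converges to a constant equivariant map $\gamma\mapsto y_\ast$. Since $G_n$ is connected and, with high probability, every generator $s_i$ labels an edge of $G_n$, the elements $\gamma_{v'}^{-1}\gamma_v$ generate $\Gamma(G_n)$, so $y_\ast$ is fixed by all of $\Gamma(G_n)$, contradicting that every orbit is unbounded.

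The probabilistic part is packaged as in the discussion following Theorem~\ref{silberman-th}: the genericity used above --- that the random relators $a(c)$ satisfy the graphical small-cancellation and non-degeneracy conditions making $\lambda_1(G_n)$ transfer to the relevant Poincar\'e inequality for $\Gamma(G_n)$, and that every generator appears --- fails only on an event of probability at most $ae^{-b|V_n|}$ with $a,b$ depending only on $k$, $d$ and $\lambda$; together with $2\le\mathrm{deg}(u)\le d$ and the girth being large enough, this is all the hypotheses are used for. The step I expect to be the real obstacle is the energy-decrease estimate together with the local-to-global promotion of constancy: this is precisely where ${\rm CAT}(0)$ geometry parts ways with Hilbert space --- every inequality must be made quantitative in $1-C$ rather than read off from orthogonality --- and one has to check that the probabilistic genericity coming from large girth is strong enough to carry the nonlinear estimate and not merely its linear shadow. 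All of this is carried out in~\cite{IKN2}.
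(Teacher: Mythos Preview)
The paper does not prove this theorem at all: it is quoted from \cite{IKN2} and used as a black box, combined with the paper's own Theorem~\ref{1/2} to obtain Theorem~\ref{fpp-for-ccc}. There is therefore nothing in the paper to compare your sketch against.

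That said, your outline is a plausible summary of the strategy behind \cite{IKN2}: adapt Silberman's argument by replacing Hilbert-space orthogonality with the barycentric control that $\delta(Y)\le C<1$ provides, transfer the spectral gap of the expander to a nonlinear Poincar\'e inequality on the Cayley complex via the large-girth/small-cancellation mechanism, and iterate an equivariant energy-decreasing step to a constant equivariant map. You have correctly identified the genuine difficulty as the quantitative energy-decrease estimate in terms of $1-C$ together with the local-to-global promotion; that is indeed the content of \cite{IKN2}, and the present paper makes no attempt to reproduce it.
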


Generally computation of the Izeki-Nayatani invariant is difficult. 
Followings are examples of spaces for which we know some estimations: 
\begin{itemize}
\item
Assume that $Y$ is a finite or infinite dimensional Hadamard manifold
or an $\mathbb{R}$-tree. Then we have $\delta (Y ) = 0$.
\item
Assume that $Y_p$ is the building $PSL(3,\mathbb{Q}_p)/PSL(3,\mathbb{Z}_p)$. Then
$\delta (Y_p ) \geq
\frac{(\sqrt{p}-1)^2}{2(p-\sqrt{p}+1)}$.
When $p = 2$, we have $\delta (Y_2 ) \leq 0.4122\ldots$
\end{itemize}
In \cite{To2}, the second author obtained a geometric condition for $\delta$ to be bounded from above by 
a constant less than $1$. 
Kondo \cite{K2} constructed the first example of a complete ${\rm CAT}(0)$ space with $\delta=1$

Although the Izeki-Nayatani invariant is defined as a global invariant of the space,  
it can be estimated by the local property of the space. 
To see this, we define the following notation, which is introduced in 
\cite{IN}.

\begin{definition}
Let $Y$ be a complete $\mathrm{CAT}(0)$ space, 
and $O\in Y$. 
we define $\delta (Y, O)\in\lbrack 0,1\rbrack$ to be
$$
\delta (Y, O )=
\sup\left\{\delta (\nu)\hspace{1mm}|\hspace{1mm}
\nu\in\mathcal{P}(Y), \mathrm{bar}(\nu )=O 
\right\} .
$$ 
If no such $\nu$ exists, we define
$\delta(Y,O)=0$. 
\end{definition}

The following lemma is basic (see \cite[Lemma3.3]{To3}). 

\begin{lemma}\label{cone-delta}
Suppose that $Y$ is a complete $\mathrm{CAT} (0)$ space. Then we have 
\begin{equation}
\delta (Y)=
\sup\{\delta (TC_p Y,\hspace{0.8mm} O)\hspace{1mm}|\hspace{1mm}
p\in Y\}
=
\sup\{\delta (TC_p Y)\hspace{1mm}|\hspace{1mm}
p\in Y\} .
\label{cone-delta-converse}
\end{equation}
\end{lemma}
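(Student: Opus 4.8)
The plan is to prove the two inequalities that together with the trivial middle term $\le$ rightmost term give the chain of equalities. One direction, namely $\delta(Y) \ge \sup_p \delta(TC_pY)$, is the substantive one and requires a limiting argument; the other direction, $\sup_p \delta(TC_pY, O) \ge \delta(Y)$, should follow by pushing a measure on $Y$ down to the tangent cone at its own barycenter.

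First I would establish $\delta(Y) \ge \delta(TC_pY)$ for each $p \in Y$. Given a measure $\mu \in \mathcal{P}(TC_pY)$ with barycenter $O$ (the cone point), I would use that $TC_pY$ is the limit in the pointed Gromov--Hausdorff (or ultralimit) sense of the rescalings $(Y, \tfrac{1}{t}d, p)$ as $t \to 0$. Transporting the finitely many points of $\mathrm{supp}(\mu)$ into the rescaled copies of $Y$, one gets measures $\mu_t \in \mathcal{P}(Y)$ (with respect to the rescaled metric, hence by scale-invariance of $\delta$ also for the original metric) whose geometry converges to that of $\mu$: pairwise distances converge, and the barycenter converges to $p$. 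Since $\delta(\mu_t)$ is defined through an infimum over $1$-Lipschitz maps $\phi$ matching the distance-to-barycenter norms, I would check that $\delta$ is continuous (in fact the relevant functionals are) under this convergence of finite configurations, so $\delta(\mu) = \lim_t \delta(\mu_t) \le \delta(Y)$. Taking the supremum over $\mu$ gives $\delta(TC_pY) \le \delta(Y)$, and then the supremum over $p$.

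For the reverse inequality $\delta(Y) \le \sup_p \delta(TC_pY, O)$, I would take any $\mu \in \mathcal{P}(Y)$ and let $p = \mathrm{bar}(\mu)$. The logarithm-type map $\log_p : Y \to TC_pY$ sending each point $q$ to the corresponding point of the tangent cone at distance $d(p,q)$ does not increase distances (this is a standard CAT$(0)$ fact: geodesics spread at least as fast as in the cone), and it preserves distance to $p$ exactly. Pushing $\mu$ forward along $\log_p$ gives a measure $\nu \in \mathcal{P}(TC_pY)$; I would verify that its barycenter is the cone point $O$ (using that $p$ is the barycenter of $\mu$ and that $\log_p$ is an isometry along each geodesic from $p$), and that any admissible $\phi$ for $\nu$ pulls back to an admissible $\phi \circ \log_p$ for $\mu$ with the same numerator and denominator in the Rayleigh-type quotient — here the non-expanding property of $\log_p$ is exactly what preserves the $1$-Lipschitz condition \eqref{1lip}, while the distance-to-barycenter condition \eqref{umbrellaborn} is preserved on the nose. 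Hence $\delta(\mu) \le \delta(\nu) \le \delta(TC_pY, O)$.

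The main obstacle I expect is the continuity/approximation step in the first direction: one must be careful that the infimum defining $\delta(\mu_t)$ genuinely converges to that defining $\delta(\mu)$, since $\delta$ is an infimum of a ratio and such quantities are only semicontinuous in general. The point is that for finitely supported measures the competitor maps $\phi$ live in a Hilbert space of bounded dimension (at most $|\mathrm{supp}(\mu)|$), the constraints \eqref{umbrellaborn}--\eqref{1lip} vary continuously with the configuration, and the denominator is bounded away from $0$ (the support has at least two points and distances converge), so a compactness argument on near-optimal $\phi$'s yields the needed continuity. This is presumably why the statement is attributed to \cite[Lemma 3.3]{To3}, and I would cite that for the delicate estimate rather than redo it.
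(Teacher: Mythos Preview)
The paper does not actually prove this lemma; it merely labels it ``basic'' and cites \cite[Lemma~3.3]{To3}. So there is nothing in the paper to compare against, and your outline already goes further than the text does.

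Your two-direction strategy is the standard one and is essentially sound. Two points deserve attention.

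First, in the step aimed at $\delta(Y)\ge\delta(TC_pY)$ you begin with a measure $\mu\in\mathcal{P}(TC_pY)$ \emph{with barycenter $O$}. That restriction only yields $\delta(Y)\ge\delta(TC_pY,O)$; combined with your second direction and the trivial inequality this gives the first equality in \eqref{cone-delta-converse} but not the second. The remedy is simply to drop the barycenter-$O$ hypothesis: your approximation of a finite configuration in $TC_pY$ by configurations in the rescaled copies of $Y$ works regardless of where $\mathrm{bar}(\mu)$ sits, since all you need is convergence of the pairwise distances and of the distances to the (moving) barycenter, and $\delta(\mu_t)\le\delta(Y)$ holds for every $\mu_t\in\mathcal{P}(Y)$.

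Second, in the reverse direction the assertion that the pushforward $\nu=(\log_p)_*\mu$ has barycenter $O$ is true but needs slightly more than ``$\log_p$ is an isometry along each ray from $p$.'' That radial isometry shows $O$ attains the same second-moment value as $p$ did; to see that no other point of $TC_pY$ does better one invokes the first-variation (variance) inequality in $\mathrm{CAT}(0)$ spaces: minimality of $p$ for $q\mapsto\int d(q,\cdot)^2\,d\mu$ forces nonnegativity of all directional derivatives at $p$, which is exactly the barycenter condition at $O$ in the cone. With that in hand, your pull-back of admissible test maps $\phi$ for $\nu$ to admissible maps $\phi\circ\log_p$ for $\mu$ is correct, and $\delta(\mu)\le\delta(\nu)\le\delta(TC_pY,O)$ follows.

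With these two clarifications your sketch matches what one would expect the cited proof in \cite{To3} to contain.
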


We recall the following definition from \cite{IKN2}.

\begin{definition}\label{radial-def}
Let $T$ be a metric cone with the origin $O_T$. 
Let $D_{\mathrm{rad}}(T)$ be the infimum of $D$ satisfying the following condition: 
there exists a map $f :T\to\mathcal{H}$ to a Hilbert space $\mathcal{H}$ 
such that 
\begin{equation}\label{radial-cond}
\| f(v)\| =d_T (O_T ,v) 
\end{equation}
and
$$
\frac{1}{D}d_T (v,w )\leq\| f(v)-f(w)\|\leq d_T (v,w)
$$
for all $v,w\in T$. 
This number $D_{\mathrm{rad}}(T)$ is called the \textit{radial distortion} of $T$. 
If there exists no such $f$, we define $D_{\mathrm{rad}}(T)=\infty$. 
\end{definition}

Izeki, Kondo and Nayatani \cite{IKN2} 
proved the following relation between 
this invariant and $\delta$. 

\begin{lemma}[\cite{IKN2}]\label{distortion-delta}
Let $T$ be a complete $\mathrm{CAT}(0)$ metric cone with the origin $O_T$. Then we have
$$
\delta (T, O_T)\leq 1-\frac{1}{D_{\mathrm{rad}} (T)^2} .
$$
\end{lemma}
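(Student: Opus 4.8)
The goal is to prove Lemma~\ref{distortion-delta}: for a complete $\mathrm{CAT}(0)$ metric cone $T$ with origin $O_T$, we have $\delta(T,O_T)\le 1-\frac{1}{D_{\mathrm{rad}}(T)^2}$.

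\medskip

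\textbf{Plan.} The strategy is to take an arbitrary $\nu\in\mathcal P(T)$ with $\mathrm{bar}(\nu)=O_T$ and bound $\delta(\nu)$ by $1-1/D_{\mathrm{rad}}(T)^2$, then take the supremum over such $\nu$. Fix $D>D_{\mathrm{rad}}(T)$ and let $f:T\to\mathcal H$ be a map witnessing the radial distortion, so $\|f(v)\|=d_T(O_T,v)$ and $\frac1D d_T(v,w)\le\|f(v)-f(w)\|\le d_T(v,w)$ for all $v,w$. The idea is to use (a rescaling of) $f$ restricted to $\mathrm{supp}(\nu)$ as the competitor map $\phi$ in the definition of $\delta(\nu)$. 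Since $\mathrm{bar}(\nu)=O_T$, the first normalization condition~\eqref{umbrellaborn} reads $\|\phi(p)\|=d_T(p,O_T)$, which is exactly the radial condition~\eqref{radial-cond} that $f$ satisfies on the nose; and the Lipschitz condition~\eqref{1lip} holds because $\|f(v)-f(w)\|\le d_T(v,w)$. So $\phi=f|_{\mathrm{supp}(\nu)}$ is an admissible test map.

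\medskip

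\textbf{Key steps.} First, I compute the denominator in $\delta(\nu)$: $\int_T\|\phi(p)\|^2\,\nu(dp)=\int_T d_T(p,O_T)^2\,\nu(dp)$, which is the variance of $\nu$ about its barycenter $O_T$; call it $\mathrm{Var}(\nu)$. Next, I need a lower bound for the numerator $\bigl\|\int_T\phi(p)\,\nu(dp)\bigr\|^2$. The key observation is that since $\mathrm{bar}(\nu)=O_T$, one has the identity $\int_T d_T(p,O_T)^2\,\nu(dp)=\frac12\iint_{T\times T}d_T(p,q)^2\,\nu(dp)\,\nu(dq)$ — this is the standard "parallelogram" computation at a barycenter in a $\mathrm{CAT}(0)$ space (the first variation / barycenter inequality forces the cross terms to vanish in the right way). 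In Hilbert space the analogous quantity for the pushforward measure $\phi_*\nu$ is $\int\|\phi(p)-\phi_*\nu\text{'s mean}\|^2$, and expanding gives $\bigl\|\int\phi\,d\nu\bigr\|^2=\int\|\phi(p)\|^2\,\nu(dp)-\frac12\iint\|\phi(p)-\phi(q)\|^2\,\nu(dp)\,\nu(dq)$. Now I bound the subtracted double integral from above using $\|\phi(p)-\phi(q)\|^2=\|f(p)-f(q)\|^2\le d_T(p,q)^2$, wait — that gives the wrong direction. Instead I should use the \emph{lower} bound $\|f(p)-f(q)\|\ge\frac1D d_T(p,q)$ in the \emph{first} term's reformulation. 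Let me reorganize: I will instead bound $\bigl\|\int\phi\,d\nu\bigr\|^2$ directly. Using the Hilbert identity above together with $\|\phi(p)-\phi(q)\|\le d_T(p,q)$ we get
$$
\Bigl\|\int_T\phi\,d\nu\Bigr\|^2
\ge\int_T\|\phi(p)\|^2\,\nu(dp)-\frac12\iint d_T(p,q)^2\,\nu(dp)\,\nu(dq)
=\mathrm{Var}(\nu)-\mathrm{Var}(\nu)=0,
$$
which is useless. So the correct route is the reverse: keep the numerator as is but write $\int\|\phi(p)\|^2\,d\nu$ in the denominator in terms of the double integral of $d_T(p,q)^2$, and write the numerator using $\|\phi(p)-\phi(q)\|\ge\frac1D d_T(p,q)$ to get
$$
\Bigl\|\int_T\phi\,d\nu\Bigr\|^2
=\int_T\|\phi(p)\|^2\,\nu(dp)-\frac12\iint\|\phi(p)-\phi(q)\|^2\,\nu(dp)\,\nu(dq)
\le\mathrm{Var}(\nu)-\frac{1}{2D^2}\iint d_T(p,q)^2\,\nu(dp)\,\nu(dq).
$$
Combined with $\frac12\iint d_T(p,q)^2\,\nu(dp)\,\nu(dq)=\mathrm{Var}(\nu)$ this gives $\bigl\|\int\phi\,d\nu\bigr\|^2\le\mathrm{Var}(\nu)\bigl(1-\tfrac1{D^2}\bigr)$. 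Dividing by $\mathrm{Var}(\nu)=\int\|\phi(p)\|^2\,d\nu$ yields $\delta(\nu)\le\dfrac{\bigl\|\int\phi\,d\nu\bigr\|^2}{\int\|\phi\|^2\,d\nu}\le 1-\dfrac1{D^2}$ (using that $\phi$ is an admissible competitor, so the infimum defining $\delta(\nu)$ is at most this ratio). Taking the supremum over $\nu$ with barycenter $O_T$ and letting $D\downarrow D_{\mathrm{rad}}(T)$ gives the claim; the case $D_{\mathrm{rad}}(T)=\infty$ is trivial since then the bound is just $\delta\le 1$, and the case where no such $\nu$ exists gives $\delta(T,O_T)=0$.

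\medskip

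\textbf{Main obstacle.} The one nontrivial ingredient is the identity $\int_T d_T(p,O_T)^2\,\nu(dp)=\frac12\iint_{T\times T}d_T(p,q)^2\,\nu(dp)\,\nu(dq)$ valid \emph{because} $O_T=\mathrm{bar}(\nu)$. This is \emph{not} true for a general point in place of $O_T$; it relies on the fact that in a $\mathrm{CAT}(0)$ space the barycenter is the minimizer of $p\mapsto\int d_T(p,q)^2\,\nu(dq)$ and the defining inequality of the barycenter (comparison with geodesics, e.g.\ \cite{BH}) upgrades to this exact equality when integrated against $\nu$ itself. Everything else — the Hilbert-space expansion of $\|\int\phi\,d\nu\|^2$ and the two distortion inequalities — is routine. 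So the writeup will first establish (or quote) this barycentric identity, then assemble the estimate as above.
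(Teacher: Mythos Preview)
The paper does not give its own proof of this lemma; it is simply quoted from \cite{IKN2}. Your argument is in fact the standard one, and the overall architecture is correct: restrict a radial $D$-distortion map $f$ to $\mathrm{supp}(\nu)$, check it is admissible for $\delta(\nu)$, expand $\bigl\|\int f\,d\nu\bigr\|^2$ via the Hilbert identity, and feed in the lower distortion bound.

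There is, however, one genuine error in your ``main obstacle'' step. You assert the \emph{equality}
\[
\int_T d_T(p,O_T)^2\,\nu(dp)\;=\;\tfrac12\iint_{T\times T} d_T(p,q)^2\,\nu(dp)\,\nu(dq)
\]
whenever $O_T=\mathrm{bar}(\nu)$, and you say the barycentric comparison inequality ``upgrades to this exact equality when integrated against $\nu$ itself.'' That is false in a general $\mathrm{CAT}(0)$ space (or cone). A tripod already breaks it: three rays meeting at $O$, unit mass $\tfrac13$ at distance $1$ on each ray; then $\mathrm{bar}(\nu)=O$, the left side equals $1$, while the right side equals $\tfrac12\cdot\tfrac{1}{9}\cdot 6\cdot 4=\tfrac{4}{3}$. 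What the $\mathrm{CAT}(0)$ variance inequality actually gives, after integrating $d(x,\mathrm{bar}(\nu))^2+\mathrm{Var}(\nu)\le\int d(x,p)^2\,\nu(dp)$ in $x$ against $\nu$, is only
\[
\mathrm{Var}(\nu)\;\le\;\tfrac12\iint d_T(p,q)^2\,\nu(dp)\,\nu(dq).
\]
Fortunately this inequality points in the direction you need: plugging it into your displayed bound
\[
\Bigl\|\int_T\phi\,d\nu\Bigr\|^2\;\le\;\mathrm{Var}(\nu)-\frac{1}{D^2}\cdot\tfrac12\iint d_T(p,q)^2\,\nu(dp)\,\nu(dq)
\]
still yields $\bigl\|\int\phi\,d\nu\bigr\|^2\le(1-\tfrac{1}{D^2})\,\mathrm{Var}(\nu)$, and the rest of your argument goes through unchanged. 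So: replace the claimed identity by the variance \emph{inequality}, cite it as the standard $\mathrm{CAT}(0)$ barycentric variance inequality (e.g.\ as in \cite{IN} or \cite{BH}), and the proof is complete.
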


We will estimate the radial distortion of each tangent cones of $\mathrm{CAT}(0)$ cube complexes in the 
next section.

\section{Embeddings of tangent cones of cube complexes}\label{proof-sec}

In this section, we prove Theorem \ref{1/2}. 
Let $C$ be the (metric) product of 
half lines $\ell_1, \cdots, \ell_n$ such that 
each $\ell_i=[0,\infty)$.
For each non-empty subcollection of $\{ 1,\cdots,n \}$, 
$\{i_1 < i_2 < \cdots <i_m \}$,
we define a subset, called a {\it face of dimension $m$} of $C$, 
by
$$\{ 0\} \times \cdots \times\{ 0 \} \times \ell_{i_1} \times \{ 0\}
 \times \cdots \times\{ 0\}
\times \ell_{i_m} \times \{ 0\} \times\cdots\times\{ 0\},$$
where all coordinates other than $i_j$ are $0$.
(This is like a face of a cube.)
We say the point $0\times \cdots \times 0$
is the face of dimension $0$.

\begin{lemma}[cube lemma]\label{cube}
Let $C$ be a finite product of half lines.
Let $X$ be the union of several faces of $C$.
Denote the path metric in $X$ by $d_X$, and 
let $d_C$ be the (path) metric in $C$.
Then for any points $p,q \in X$,
$$ d_X(p,q)/\sqrt{2} \le d_C(p,q) \le d_X(p,q)$$

\end{lemma}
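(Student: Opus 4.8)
The plan is to prove the two inequalities of the displayed formula separately. The right-hand inequality $d_C(p,q)\le d_X(p,q)$ is essentially free: since $C=\ell_1\times\cdots\times\ell_n$ is a convex subset of $\mathbb{R}^n$ with the $\ell^2$ product metric, it is geodesic, so $d_C$ agrees with the restriction of the Euclidean metric and $d_C(p,q)=\|p-q\|$ (Euclidean norm). As $X\subseteq C$, every path in $X$ from $p$ to $q$ is a path in $C$, and its length is at least $d_C(p,q)$; taking the infimum over such paths gives $d_C(p,q)\le d_X(p,q)$.

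For the left-hand inequality $d_X(p,q)\le\sqrt2\,d_C(p,q)$ I would exhibit a single short path in $X$. Since $X$ is a union of faces of $C$ and $p\in X$, there is a face $F$ in the given collection with $p\in F$, and likewise a face $G$ with $q\in G$; write $A,B\subseteq\{1,\dots,n\}$ for the sets of axes spanning $F$ and $G$. Each face is a coordinate sub-orthant, hence convex, and a point of a face has all coordinates outside that face's axis set equal to $0$; in particular $p_i=0$ for $i\notin A$ and $q_i=0$ for $i\notin B$. Now define the point $r$ by $r_i=\min(p_i,q_i)$ if $i\in A\cap B$, and $r_i=0$ otherwise. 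Then $r$ has nonnegative coordinates, all supported on $A\cap B$, so $r\in F\cap G$; by convexity the straight segment $[p,r]$ lies in $F\subseteq X$ and $[r,q]$ lies in $G\subseteq X$, so their concatenation is a path in $X$ and therefore $d_X(p,q)\le\|p-r\|+\|r-q\|$.

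It remains to bound the right-hand side, and here a coordinate-by-coordinate check shows $(p_i-r_i)^2+(r_i-q_i)^2=(p_i-q_i)^2$ for every $i$: if $i\in A\cap B$ this holds because $r_i\in\{p_i,q_i\}$; if $i$ lies in exactly one of $A$ and $B$, then the corresponding one of $p_i,q_i$ vanishes, as does $r_i$; and if $i\notin A\cup B$ then $p_i=q_i=r_i=0$. Summing over $i$ gives $\|p-r\|^2+\|r-q\|^2=\|p-q\|^2=d_C(p,q)^2$, and the elementary inequality $\sqrt a+\sqrt b\le\sqrt{2(a+b)}$ then yields $\|p-r\|+\|r-q\|\le\sqrt2\,d_C(p,q)$, i.e. $d_X(p,q)/\sqrt2\le d_C(p,q)$, as desired. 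The only point that needs care is bookkeeping of which coordinates are forced to be $0$ — this is precisely what makes $r$ lie in $F\cap G$ and keeps the two segments inside $F$ and $G$ — and I emphasize that one never needs to compute $d_X$ (or even the path metric of $F\cup G$) exactly, since only the upper bound coming from this one explicit path is used.
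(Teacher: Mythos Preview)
Your proof is correct and follows the same overall strategy as the paper's: for the nontrivial inequality, pick faces $F\ni p$ and $G\ni q$, pass through a single pivot point in $F\cap G$, and bound the resulting two-segment path by $\sqrt{2}\,d_C(p,q)$. The execution differs slightly. The paper takes the \emph{minimal} faces $P,Q$ containing $p,q$, projects to $p',q'\in R=P\cap Q$, uses orthogonality to get $d_C(p,q)^2=d_C(p,p')^2+d_C(p',q')^2+d_C(q',q)^2$, and then unfolds $P\cup Q$ along $R$ to find the shortest broken path through $[p',q']$, of squared length $d_C(p',q')^2+(d_C(p,p')+d_C(q,q'))^2\le 2d_C(p,q)^2$. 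Your choice of pivot $r_i=\min(p_i,q_i)$ on $A\cap B$ (and $0$ elsewhere) is a different point in general, but it makes each coordinate contribute to exactly one of the two legs, giving the exact identity $\|p-r\|^2+\|r-q\|^2=\|p-q\|^2$ and letting Cauchy--Schwarz finish in one line. Same idea, slightly cleaner bookkeeping on your side.
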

\proof
$ d_C(p,q) \le d_X(p,q)$ is clear. So we show 
the other inequality.
There is a unique face $P$ of $C$ which has the least
dimension among faces which contain $p$.
Also, there is a unique face $Q$ for $q$.
Let $R=P\cap Q$ be the intersection, which is also a face. 
Let $p',q' \in R$ be the projection of $p,q$, respectively.
Look at the path
$$[p,p']\cup [p',q'] \cup [q',q] \subset (P \cup Q) \subset X$$
The angle at $p',q'$, measured in $C$ are $\pi/2$, 
and therefore we compute
$$d_C(p,q)^2=d_C(p,p')^2+d_C(p',q')^2+d_C(q',q)^2$$
On the other hand, to estimate $d_X(p,q)$, 
we compute $d_{P\cup Q}(p,q)$.
Let's look for the shortest 
path among the paths from $p$ to $q$ in $P \cup Q$
which starts from $p$ then reaches
some point $r \in [p',q']$, then 
goes to $q$.
(Imagine for example $P$ is a $3$-cube and $Q$
is a square such that $P$ and $Q$ share an edge, which
is $R$.)
We then find a path the  square of whose length is 
$$d_C(p',q')^2+(d_C(p,p')+d_C(q,q'))^2$$
This number gives an upper bound of $d_X(p,q)^2$.
On the other hand, this number is 
$\le 2d_C(p,q)^2$ by the above computation, so that 
$d_X(p,q)^2 \le 2 d_C(p,q)^2$.

\qed

\begin{lemma}\label{at-vertex}
Let $X$ be a ${\rm CAT}(0)$ cube complex and $p$ a vertex.
Then there exists a $1$-Lipschitz
map $\phi:TC_p(X) \to \mathcal{H}$
such that 
$$
d_{\mathcal{H}}(0,\phi (v))=d_{TC_p X} (O,v), \quad
d_{TC_pX}(u,v)/\sqrt{2} \le d_{\mathcal{H}}(\phi (u),\phi (v)) \le d_{TC_pX}(u,v)$$
for all $u,v\in TC_p X$. 
\end{lemma}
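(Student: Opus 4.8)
\textbf{Proof plan for Lemma \ref{at-vertex}.}

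The plan is to reduce the statement to the cube lemma (Lemma \ref{cube}) by understanding the local structure of a ${\rm CAT}(0)$ cube complex at a vertex. First I would recall that the tangent cone $TC_p X$ at a vertex $p$ of a ${\rm CAT}(0)$ cube complex is itself naturally a ${\rm CAT}(0)$ cube-complex-like object: concretely, $TC_p X$ is the Euclidean cone over the link $\mathrm{Lk}(p)$, and since $X$ is ${\rm CAT}(0)$, the link is a flag simplicial complex, so $TC_p X$ decomposes as a union of "cubical sectors" — each maximal cube $Q$ of $X$ containing $p$ contributes an orthant (a copy of $[0,\infty)^{\dim Q}$), and these orthants are glued along common coordinate subspaces exactly as faces of a single large product of half lines would be. The key geometric input is Gromov's flag condition: whenever finitely many edges at $p$ pairwise span squares, they jointly span a cube. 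This is what allows all the orthants at $p$ to be realized simultaneously as faces of one ambient product $C=\ell_1\times\cdots\times\ell_n$ of half lines, where $n$ is the total number of edges of $X$ emanating from $p$ (equivalently, the number of vertices of $\mathrm{Lk}(p)$).

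Granting this, the argument is: let $C$ be the product of $n$ half lines indexed by the edges at $p$, and identify $TC_p X$ isometrically (for the path metric) with the subcomplex $X' \subseteq C$ given by the union of those faces of $C$ corresponding to the cubes of $X$ at $p$. The path metric $d_{TC_p X}$ is then exactly the path metric $d_{X'}$, and $C$ is a Hilbert space (finite-dimensional Euclidean space, isometrically embedded in $\mathcal{H}$ by padding with zeros). Take $\phi : TC_p X = X' \hookrightarrow C \subseteq \mathcal{H}$ to be this inclusion. Then $d_{\mathcal{H}}(0,\phi(v)) = d_C(O,v) = d_{X'}(O,v) = d_{TC_p X}(O,v)$ because the origin lies in every face, so geodesics from $O$ in $C$ to any point of a face stay in that face. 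The two-sided estimate $d_{TC_p X}(u,v)/\sqrt 2 \le d_{\mathcal H}(\phi(u),\phi(v)) \le d_{TC_p X}(u,v)$ is precisely the conclusion of Lemma \ref{cube} applied to $X' \subseteq C$; in particular $\phi$ is $1$-Lipschitz.

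The main obstacle I anticipate is making rigorous the identification of $TC_p X$ with a union of faces of a \emph{single} product of half lines — i.e., verifying that the cubes of $X$ at $p$ assemble compatibly inside one orthant $[0,\infty)^n$, with no two distinct cubes of $X$ mapping to the same face and with the gluings respected. This is exactly where the flag (no-$\triangle$) condition characterizing ${\rm CAT}(0)$ cube complexes enters, and one must be slightly careful when $p$ lies in infinitely many cubes (so that $n = \infty$): then $C$ should be taken as an $\ell^2$-product of half lines, $C = \{(x_i)_{i\in I} : x_i \ge 0, \sum x_i^2 < \infty\}$, which still embeds in a Hilbert space, and each face (corresponding to a finite-dimensional cube of $X$) is finite-dimensional, so the cube lemma still applies on each face. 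A secondary point to check is that the intrinsic cone metric on $TC_p X$ genuinely agrees with the path metric of $X'$ in $C$; this follows from the fact that $X$ is, near $p$, isometric to a neighborhood of the cone point in $TC_p X$ (true at vertices of a cube complex, where no smoothing occurs), together with the standard description of geodesics in ${\rm CAT}(0)$ cube complexes as staying within the combinatorial carrier.
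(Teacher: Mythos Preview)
Your proposal is correct and is essentially the paper's own argument: the paper also indexes a Hilbert-space orthonormal set by the edges $e_i$ at $p$, writes each point of $TC_pX$ uniquely as $\sum_i t_i e_i$, defines $\phi(\sum t_i e_i)=\sum t_i v_i$, and then invokes the cube lemma. Your write-up is in fact more careful than the paper's on two points---you spell out that the flag condition is what guarantees the unique-coordinate description (the paper relegates this to a remark after the proof), and you address the case of infinitely many edges at $p$ by passing to an $\ell^2$-product, which the paper glosses over.
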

\proof
Let $e_i (i \in I)$ be the set of all edges of $X$ which contain
$p$.
For each $n$-cube $C$ in $X$ which contains $p$,
any point $x \in C$ is uniquely written 
as $x=\sum_i^n t_{i} e_{j_i}$
such that $e_{j_i}$ are the edges which are contained in $C$, 
and $1 \ge t_i \ge 0$.
In this way, any point $x \in C$ 
is uniquely written as $x=\sum_i t_i e_i$
such that $1 \ge t_i \ge 0$ and $t_i=0$ except for finitely many ones. 
Similarly, any point $x \in TC_pX$ 
is uniquely written as
$$x=\sum_i t_i e_i , $$
where $0 \le t_i \le 1$ and $t_i=0$ except for finitely many ones. 
To be precise we have abused the notation such that 
$e_i$ is now the unit vector in $TC_pX$ which 
corresponds to the direction of the edge $e_i$.
Now prepare a set of orthonormal vectors
$v_i \in \mathcal{H}, (i \in I)$.
To define a map $\phi:TC_pX \to \mathcal{H}$,
for $x=\sum_i t_i e_i$, 
set $\phi(x)=\sum_i t_i v_i$.
Now apply Lemma \ref{cube}.
\qed

\begin{remark}
In the proof of the above lemma, 
we use the
following condition 
which is satisfied 
at each vertex $v$ of a ${\rm CAT}(0)$ cube complex $X$: 
Let $e_i (i \in I)$ be the collection of all edges
which contain $v$.
For each finite subset $J \subset I$, 
there is at most one cube in $X$ which contains
all of $e_j, j \in J$ and whose dimension is 
$|J|$.

\end{remark}

Thus, we have proved the following proposition. 

\begin{proposition}\label{ccc-drad}
Let $Y$ be a $\mathrm{CAT}(0)$ cube complex, and $p\in Y$.  
Then we have 
\begin{equation*}
D (TC_p Y)\leq
\sqrt{2}
,
\quad
D_{\mathrm{rad}}(TC_p Y)
\leq
\sqrt{2}.
\end{equation*}
\end{proposition}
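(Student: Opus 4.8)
The plan is to reduce the statement to the vertex case already settled in Lemma~\ref{at-vertex}. Since $D_{\mathrm{rad}}(T)$ and the Hilbert-space distortion $D(T)$ depend only on the isometry type of the metric cone $T$, it suffices, for each $p\in Y$, to produce a single map $f\colon TC_pY\to\mathcal H$ into a Hilbert space with $\|f(v)\|=d_{TC_pY}(O,v)$ and $d_{TC_pY}(v,w)/\sqrt2\le\|f(v)-f(w)\|\le d_{TC_pY}(v,w)$ for all $v,w$; such an $f$ is in particular $\sqrt2$-bi-Lipschitz and satisfies condition~\eqref{radial-cond}, so it yields $D(TC_pY)\le\sqrt2$ and $D_{\mathrm{rad}}(TC_pY)\le\sqrt2$ simultaneously. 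I claim that for every $p$ the tangent cone $TC_pY$ admits an isometric description as a union of faces of a (possibly infinite) product of half-lines $[0,\infty)^I$ in which any finite subcollection of the defining half-lines spans at most one face of the corresponding dimension; this is precisely the setting in which the argument of Lemma~\ref{at-vertex} operates. Granting this, one fixes an orthonormal family $\{v_i\}_{i\in I}$ in $\mathcal H$ and takes for $f$ the inclusion $TC_pY\hookrightarrow[0,\infty)^I\hookrightarrow\mathcal H$ sending the $i$-th half-line onto $[0,\infty)v_i$. The radial identity holds because the segment from the apex to any point lies in a single face, and Lemma~\ref{cube} (applied, as in the proof of Lemma~\ref{at-vertex}, inside the finite product of half-lines spanned by the faces meeting a geodesic between a given pair of points) supplies the two-sided distance bound.

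So the real task is the structural claim. Let $\sigma$ be the unique cube of $Y$ whose relative interior contains $p$, and put $k=\dim\sigma$. Near $p$ the complex $Y$ is the union of the cubes $\tau\supseteq\sigma$, and the tangent cone at $p$ of such a $\tau$ is $\mathbb R^k\times[0,\infty)^{\dim\tau-k}$, the flat factor $\mathbb R^k=TC_p\sigma$ recording the directions inside $\sigma$ and the coordinate half-lines recording the directions of $\tau$ transverse to $\sigma$. These cones share the factor $\mathbb R^k$ and are glued along common subcones, so $TC_pY$ is isometric to $\mathbb R^k\times L$, where $L$ is the Euclidean cone over the link of $\sigma$ taken with its all-right piecewise-spherical metric. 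For $k=0$ this is the vertex case, with $L$ already a union of half-line faces exactly as in Lemma~\ref{at-vertex} and the remark following it. For $k\geq1$ one first replaces each Euclidean coordinate line of the $\mathbb R^k$ factor by two half-lines $[0,\infty)$ joined at the origin; this is an isometry for the path metric, being the one-dimensional observation $\mathbb R\cong[0,\infty)\cup[0,\infty)$ carried out in a product, and it presents $TC_pY$ as a union of faces of $[0,\infty)^I$. The uniqueness-of-faces condition then follows because the link of any cube of a ${\rm CAT}(0)$ cube complex is a flag complex (Gromov's link condition), so a finite set of half-lines coming from $L$ spans at most one simplex-cone, while two half-lines originating from a single coordinate of $\sigma$ never lie in a common face.

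I expect the main obstacle to be exactly this structural input: verifying that near an arbitrary point the complex still looks like a union of half-line faces obeying the unique-face (flag) condition, that is, transferring the content of the remark following Lemma~\ref{at-vertex} from vertices to all points by way of the decomposition $TC_pY\cong\mathbb R^k\times L$ and the flag property of links. Once this is established, the construction of $f$ and the distance estimate are word-for-word the vertex argument. The remaining points are routine: $f$ restricted to each face is an isometry onto a coordinate orthant, so \eqref{radial-cond} holds automatically; and it is enough to apply Lemma~\ref{cube} within the finite product of half-lines spanned by the faces meeting a geodesic segment, which is the same passage from the finite to the infinite-dimensional case already used in the proof of Lemma~\ref{at-vertex}.
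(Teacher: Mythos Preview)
Your argument is correct and essentially the same as the paper's: both reduce the general point to the vertex case via the product decomposition $TC_pY\cong\mathbb R^k\times L$, the paper by passing to an auxiliary ${\rm CAT}(0)$ cube complex $X'$ with a vertex $p'$ so that $TC_pY\cong TC_{p'}X'\times\mathbb R^n$ and then invoking Lemma~\ref{at-vertex} directly, while you unpack the same move by splitting each Euclidean coordinate line into two half-lines and re-running the coordinate map. (One minor point: the ``at most one cube on a given set of edges'' condition needed for the construction in Lemma~\ref{at-vertex} is more elementary than the flag property---it holds in any cube complex because cells are determined by their boundaries---so invoking Gromov's link condition at that step is unnecessary, though harmless.)
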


\begin{proof}

Notice that we may assume without loss of generality that 
$p$ is a vertex. This is because if $p$ is not 
a vertex, namely, $p$ is an interior point of an $n$-dimensional
cube, then there exists another ${\rm CAT}(0)$ cube complex $X'$ and a vertex
$p'$ 
such that a neighborhood of $p$ in $X$ is isometric to 
a neighborhood of $(p',0)$ in $X' \times {\Bbb R}^n$.
Then, $TC_pX$ is isometric
to $TC_{p'}X' \times {\Bbb R}^n$.

Now the proposition follows from Lemma \ref{at-vertex}. 
\end{proof}

Combining Proposition \ref{ccc-drad} with Lemma \ref{distortion-delta} and Lemma \ref{cone-delta},
we obtain Theorem \ref{1/2}. 
Now Theorem 2.2 applies to ${\rm CAT}(0)$ cube complexes, therefore we obtain Theorem 1.5.

\section{Wang's invariant and uniform embeddability of expanders}
\label{graph-model-sec}

In this section, 
we prove Corollary \ref{expander-ccc}. 
For a finite graph $G$ and a complete ${\rm CAT}(0)$ 
space $Y$, Wang \cite{Wa} defined the first positive
eigenvalue of a combinatorial Laplacian,
$\lambda_1(G,Y)$, for maps from the set of 
vertices of $G$ to $Y$.
This is a natural generalization of the one 
for the usual combinatorial Laplacian, $\lambda_1(G)$,
for real-valued functions on the set of vertices.

\begin{definition}[$\lambda_1(\Gamma,Y)$ by Wang]
Let $G=(V,E)$ be a finite graph, and $Y$ be a complete ${\rm CAT}(0)$ space. 
We assume that $Y$ contains at least two points. 
The Wang's invariant $\lambda_1 (G, Y)$ is defined by 
$$
\lambda_1 (G,Y)
=
\inf_{\phi}
\frac{\sum_{\{u,v\}\in E}d_Y (\phi (u),\phi (v))^2}{
\sum_{v\in V}\mathrm{deg}(v)d_Y (\phi (v),\overline{\phi})^2}, 
$$
where the infimum is taken over all nonconstant maps $\phi :V\to Y$, and 
$\overline{\phi}$ denotes the barycenter of the probability measure 
$
\sum_{v\in V}\frac{\mathrm{deg}(v)}{2|E|}\mathrm{Dirac}_{\phi (v)}
$ 
on $Y$. $\mathrm{Dirac}_{\phi (v)}$ is the Dirac measure at $\phi (v)\in Y$. 
\end{definition}

If we see ${\Bbb R}$ as a ${\rm CAT}(0)$ space, 
$\lambda_1(G)=\lambda_1(G,{\Bbb R})$ holds.
If we take a Hilbert space $\mathcal{H}$,
it is not hard to show from the definition 
$\lambda_1(G,\mathcal{H})=\lambda_1(G)$.
Originally, the invariant $\delta(Y)$ was 
introduced to give an estimate of $\lambda_1(G,Y)$. 
Izeki and Nayatani have shown the following.
\begin{theorem}[Izeki-Nayatani]\label{lambda}
\cite[Proposition 6.3]{IN}
Let $G$ be a finite graph and $Y$ a complete 
${\rm CAT}(0)$ space. Then we have
$$(1-\delta(Y))\lambda_1(G) \le \lambda_1(G,Y) \le 
\lambda_1(G).$$
\end{theorem}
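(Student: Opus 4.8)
The plan is to establish the two inequalities separately. The right-hand inequality $\lambda_1(G,Y)\le\lambda_1(G)$ is the soft one — it only uses that $Y$ contains a geodesic segment — while the left-hand inequality is where the Izeki--Nayatani invariant genuinely enters, through the admissible maps $\phi$ of Definition \ref{delta-def} and the already-quoted fact $\lambda_1(G,\mathcal{H})=\lambda_1(G)$.

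For the upper bound I would fix two distinct points $y_0,y_1\in Y$, let $\gamma\colon[0,1]\to Y$ be the geodesic from $y_0$ to $y_1$ parametrised proportionally to arclength, and for a nonconstant $f\colon V\to[0,1]$ set $\phi:=\gamma\circ f$. Since the image of $\gamma$ is a complete convex subset of $Y$, nearest-point projection onto it is distance-nonincreasing, so the function $x\mapsto\int_Y d_Y(x,p)^2\,d\nu(p)$, with $\nu=\sum_v\frac{\deg(v)}{2|E|}\mathrm{Dirac}_{\phi(v)}$, attains its minimum on that image; there it is an ordinary one-variable quadratic, whence $\overline{\phi}=\mathrm{bar}(\nu)=\gamma\!\left(\sum_v\frac{\deg(v)}{2|E|}f(v)\right)$. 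Then $d_Y(\phi(u),\phi(v))=d_Y(y_0,y_1)\,|f(u)-f(v)|$ and $d_Y(\phi(v),\overline{\phi})=d_Y(y_0,y_1)\,|f(v)-\bar f|$, so the Rayleigh quotient of $\phi$ in the definition of $\lambda_1(G,Y)$ equals the ordinary Rayleigh quotient of $f$; taking the infimum over all nonconstant $f\colon V\to[0,1]$, which after an affine reparametrisation exhausts all nonconstant real maps, gives $\lambda_1(G,Y)\le\lambda_1(G)$.

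For the lower bound I may assume $\delta(Y)<1$, as otherwise the inequality is vacuous. Fix a nonconstant $\phi\colon V\to Y$; then $\mu:=\sum_v\frac{\deg(v)}{2|E|}\mathrm{Dirac}_{\phi(v)}$ lies in $\mathcal{P}(Y)$ with $\mathrm{bar}(\mu)=\overline{\phi}$ and $\delta(\mu)\le\delta(Y)<1$. Given $\varepsilon>0$ with $\delta(\mu)+\varepsilon<1$, I would choose, by the defining infimum for $\delta(\mu)$, a map $\psi\colon\mathrm{supp}(\mu)\to\mathcal{H}$ into a Hilbert space satisfying \eqref{umbrellaborn}, \eqref{1lip}, and $\left\|\int_Y\psi\,d\mu\right\|^2\le(\delta(\mu)+\varepsilon)\int_Y\|\psi\|^2\,d\mu$, and then set $f:=\psi\circ\phi\colon V\to\mathcal{H}$. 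Its weighted mean is $\overline{f}=\sum_v\frac{\deg(v)}{2|E|}\psi(\phi(v))=\int_Y\psi\,d\mu$; were $f$ constant the last displayed ratio would equal $1$ (and if that constant were $0$ then $\phi$ would be constant), so $f$ is nonconstant.

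It then remains to combine three things. First, the polarisation identity $\sum_v\deg(v)\|f(v)-\overline{f}\|^2=\sum_v\deg(v)\|f(v)\|^2-2|E|\,\|\overline{f}\|^2$, together with $\sum_v\deg(v)\|f(v)\|^2=\sum_v\deg(v)d_Y(\phi(v),\overline{\phi})^2=2|E|\int_Y\|\psi\|^2\,d\mu$ and the choice of $\psi$, yields $\sum_v\deg(v)\|f(v)-\overline{f}\|^2\ge(1-\delta(\mu)-\varepsilon)\sum_v\deg(v)d_Y(\phi(v),\overline{\phi})^2$. Second, \eqref{1lip} gives $\sum_{\{u,v\}\in E}\|f(u)-f(v)\|^2\le\sum_{\{u,v\}\in E}d_Y(\phi(u),\phi(v))^2$. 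Third, applying $\lambda_1(G,\mathcal{H})=\lambda_1(G)$ to the nonconstant $f$ bounds the left side of the previous inequality below by $\lambda_1(G)\sum_v\deg(v)\|f(v)-\overline{f}\|^2$. Chaining these and using $\delta(\mu)\le\delta(Y)$,
\[
\sum_{\{u,v\}\in E}d_Y(\phi(u),\phi(v))^2\;\ge\;\lambda_1(G)\bigl(1-\delta(Y)-\varepsilon\bigr)\sum_v\deg(v)d_Y(\phi(v),\overline{\phi})^2 ,
\]
and letting $\varepsilon\to0$ and then taking the infimum over nonconstant $\phi$ gives $\lambda_1(G,Y)\ge(1-\delta(Y))\lambda_1(G)$. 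The only place the geometry of $Y$ is used beyond the formal definition of $\delta$ is the identification of the relevant barycenters — in the upper bound, that $\mathrm{bar}(\nu)$ lies on the chosen geodesic, and, throughout, that the push-forward barycenter $\overline{\phi}$ is matched by the Hilbert-space weighted mean of $f$ under the normalisation \eqref{umbrellaborn} — and I expect pinning those down cleanly to be the one point that is not pure bookkeeping.
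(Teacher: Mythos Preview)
The paper does not give its own proof of this theorem; it is quoted from \cite[Proposition~6.3]{IN} and used as a black box. Your argument is correct and is essentially the original Izeki--Nayatani proof: embed a geodesic to get the upper bound, and for the lower bound push a nonconstant $\phi$ through an almost-optimal $\psi$ from Definition~\ref{delta-def}, use \eqref{umbrellaborn} to match the denominator, \eqref{1lip} to control the numerator, and the Hilbert-space variance identity together with $\lambda_1(G,\mathcal{H})=\lambda_1(G)$ to close the chain. Your handling of the nonconstancy of $f$ and of the barycenter on the geodesic is fine.
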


By Theorem \ref{1/2} and \ref{lambda}, we immediately obtain 
the following. 
\begin{corollary}\label{cube.lambda}
Let $X$ be a complete ${\rm CAT}(0)$ cube complex,
and $G$ be a finite graph.
Then
$$ \lambda_1(G)/2 \le \lambda_1(G,X)
\le \lambda_1(G).$$
\end{corollary}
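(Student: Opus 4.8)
The plan is to deduce this directly from the two ingredients already in hand, so the argument is short. First I would invoke Theorem \ref{lambda} of Izeki and Nayatani with $Y = X$: since a complete ${\rm CAT}(0)$ cube complex is in particular a complete ${\rm CAT}(0)$ space, that theorem applies verbatim and yields
$$(1-\delta(X))\,\lambda_1(G) \le \lambda_1(G,X) \le \lambda_1(G).$$
The right-hand inequality is already the upper bound asserted in the corollary, so nothing more is needed there.

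For the lower bound I would feed in Theorem \ref{1/2}, which gives $\delta(X) \le \tfrac12$, hence $1 - \delta(X) \ge \tfrac12$. Since $\lambda_1(G) \ge 0$ (the variational formula defining $\lambda_1(G)$ is a quotient of nonnegative sums of squares, so it is nonnegative; when $G$ is connected it is in fact positive), multiplying the inequality $1 - \delta(X) \ge \tfrac12$ through by $\lambda_1(G)$ preserves the direction, giving $(1-\delta(X))\,\lambda_1(G) \ge \lambda_1(G)/2$. Chaining this with the left inequality from Theorem \ref{lambda} produces $\lambda_1(G)/2 \le \lambda_1(G,X)$, which combined with the upper bound above completes the proof.

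There is essentially no obstacle: the corollary is a purely formal consequence of Theorem \ref{1/2} and Theorem \ref{lambda}. The only points worth checking are the nonnegativity of $\lambda_1(G)$, so that the scaling step is legitimate, and the degenerate case in which $G$ is disconnected — there $\lambda_1(G)=0$ forces $\lambda_1(G,X)=0$ as well (every map to $X$ can be taken locally constant on components), so all three quantities vanish and the claimed inequalities hold trivially.
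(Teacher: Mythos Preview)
Your proof is correct and is exactly the argument the paper has in mind: the corollary is stated as an immediate consequence of Theorem~\ref{1/2} and Theorem~\ref{lambda}, and you have spelled out precisely that combination. The extra remarks on nonnegativity of $\lambda_1(G)$ and the disconnected case are harmless clarifications but not needed for the paper's purposes.
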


The invariant $\lambda_1(G,Y)$ is 
important not only in the study of fixed point theorems, 
but also in connection to the embedding of graphs in $Y$. 
To explain the result by Gromov that a sequence of expanders 
does not uniformly embed in the Hilbert space, 
let $\{G_n =(V_n ,E_n)\}$ be a sequence of finite 
graphs such that 
\begin{enumerate}
\item
The number of vertices of $G_n$ goes to infinity as $n$ goes to infinity.
\item
There exists $d>0$ such that $\mathrm{deg}(v)\leq d$ for all $v\in V_n$ and all $n$.
\end{enumerate}
If the sequence $G_n$ uniformly embeds into a Hilbert space 
$\mathcal{H}$, then one can show 
\begin{equation}\label{embedding}
\liminf_n \lambda_1(G_n) =0.
\end{equation}
If $\{G_n \}$ is a sequence of expanders,
then by definition, it must satisfy 
$$\liminf_n \lambda_1(G_n) =c>0 ,$$
therefore we obtain the following.

\begin{theorem}[Gromov, see \cite{Ro}]
Let $\{ G_n \}$ be a sequence of expanders.
Then there is no uniform embedding of 
$\{G_n \}$ into a Hilbert space.
\end{theorem}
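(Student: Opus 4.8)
The plan is to reduce the theorem to the inequality \eqref{embedding}. Once we know that a uniform embedding of a bounded-degree sequence $\{G_n\}$ into a Hilbert space $\mathcal H$ forces $\liminf_n\lambda_1(G_n)=0$, the theorem follows immediately: a sequence of expanders satisfies $\lambda_1(G_n)\ge\lambda>0$ for all $n$ by definition, so it cannot be uniformly embedded into any Hilbert space. Thus essentially all the content lies in proving \eqref{embedding}, and that is what I would do first.

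To prove \eqref{embedding}, suppose $f_n\colon V_n\to\mathcal H$ is a uniform embedding with non-decreasing unbounded control functions $\rho_1,\rho_2$, and let $d$ be the degree bound. For $n$ large $f_n$ is nonconstant (its image must contain points at large $\mathcal H$-distance), so $f_n$ is an admissible test map in the variational definition of $\lambda_1(G_n,\mathcal H)$; since $\lambda_1(G_n,\mathcal H)=\lambda_1(G_n)$ we obtain
\[
\lambda_1(G_n)\ \le\ \frac{\sum_{\{u,v\}\in E_n}\|f_n(u)-f_n(v)\|^2}{\sum_{v\in V_n}\deg(v)\,\|f_n(v)-\overline{f_n}\|^2}.
\]
The numerator is at most $|E_n|\,\rho_2(1)^2\le\frac{d}{2}|V_n|\,\rho_2(1)^2$, because the two endpoints of an edge are at graph distance $1$. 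For the denominator, fix $r>0$ with $\rho_1(r)>0$. Since every vertex has degree at most $d$, a ball of radius $r$ in $G_n$ contains at most a constant $K=K(d,r)$ vertices, so once $|V_n|\ge 2K$ there are at least $|V_n|^2-K|V_n|\ge\tfrac12|V_n|^2$ ordered pairs $(u,v)\in V_n\times V_n$ with $d_{G_n}(u,v)>r$, and each such pair contributes at least $\rho_1(r)^2$ to $\sum_{u,v\in V_n}\|f_n(u)-f_n(v)\|^2$. Using $\deg(v)\ge1$ together with the variance identity $\sum_{v}\|f_n(v)-p\|^2\ge\frac{1}{2|V_n|}\sum_{u,v\in V_n}\|f_n(u)-f_n(v)\|^2$, valid for every $p\in\mathcal H$ (with equality when $p$ is the mean), applied to $p=\overline{f_n}$, the denominator is at least $\tfrac14|V_n|\,\rho_1(r)^2$. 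Hence $\lambda_1(G_n)\le 2d\,\rho_2(1)^2/\rho_1(r)^2$ for all large $n$, so $\liminf_n\lambda_1(G_n)\le 2d\,\rho_2(1)^2/\rho_1(r)^2$; letting $r\to\infty$ and using that $\rho_1$ is unbounded gives \eqref{embedding}.

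The main obstacle is precisely the lower bound on the denominator, that is, showing the images $f_n(v)$ are genuinely spread out in $\mathcal H$; this is where the bounded-degree hypothesis enters essentially, through the control $K(d,r)$ on the size of balls in $G_n$, combined with the lower bound $\rho_1$ and its unboundedness. The remaining points are routine: that $f_n$ may be assumed nonconstant for large $n$, that we may assume $\rho_2(1)>0$ (otherwise $f_n$ is constant on each edge, hence on the connected graph $G_n$, contradicting the unboundedness of $\rho_1$), and the variance identity itself. Finally, feeding \eqref{embedding} into the definition of a sequence of expanders finishes the proof: a uniform embedding of $\{G_n\}$ into a Hilbert space would force $\liminf_n\lambda_1(G_n)=0$, contradicting $\lambda_1(G_n)\ge\lambda>0$ for all $n$.
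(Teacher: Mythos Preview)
Your argument is correct and follows the same overall strategy as the paper: reduce the theorem to the implication ``uniform embedding $\Rightarrow \liminf_n\lambda_1(G_n)=0$'', bound the Rayleigh numerator by $\rho_2(1)^2$ on edges, and use the bounded-degree hypothesis together with $\rho_1$ to control the denominator.

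The only difference is in how the denominator is handled. The paper (in its proof of Theorem~\ref{lambda1-th}, which specializes to the Hilbert case) argues by contradiction via a Chebyshev--pigeonhole step: if $\lambda_1(G_n)\ge\lambda>0$ then at least $|V_n|/2$ vertices map into a ball of fixed radius $r=(2d/\lambda)^{1/2}\rho_2(1)$, whereas the preimage of any $r$-ball contains at most $d^{1+\rho_1^{-1}(2r)}$ vertices, contradicting $|V_n|\to\infty$. You instead bound the variance directly: count ordered pairs at graph distance $>r$ and apply the identity $\sum_v\|f_n(v)-p\|^2\ge\frac{1}{2|V_n|}\sum_{u,v}\|f_n(u)-f_n(v)\|^2$ to get an explicit inequality $\lambda_1(G_n)\le 2d\,\rho_2(1)^2/\rho_1(r)^2$, then let $r\to\infty$. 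Your route is slightly more quantitative and avoids the contradiction step; the paper's route is a bit more geometric and transfers verbatim to general $\mathrm{CAT}(0)$ targets where the variance identity is replaced by the barycenter inequality. Both rely on the same ingredients (bounded degree, $\rho_1$ unbounded, $\rho_2(1)$ finite), and neither is materially harder than the other.
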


By Corollary  \ref{cube.lambda}, for a complete ${\rm CAT}(0)$ cube complex $Y$, 
the sequence of expanders, $\{ G_n \}$ satisfies 
\begin{equation}\label{ccc-lambda1}
\liminf_n \lambda_1(G_n,Y)  \ge \liminf_n\lambda_1(G_n)/2
=c/2>0 .
\end{equation}
As for the equation \eqref{embedding},
the same conclusion follows from the same argument
if we replace the Hilbert space by a complete
${\rm CAT}(0)$ space $Y$. 
Similar observation can be found in  \cite{K2} and \cite{MN}
as we mentioned. For the readers' convenience, we record
the argument.

\begin{theorem}\label{lambda1-th}
Let $\{ G_n =(V_n ,E_n)\}$ be a sequence of finite graphs satisfying the conditions 
(1) and (2). 
If $\{ G_n \}$ embeds uniformly into a complete ${\rm CAT}(0)$ space $Y$, 
then we have 
$$
\liminf_{n\to\infty} \lambda_1(G_n ,Y) =0. 
$$
\end{theorem}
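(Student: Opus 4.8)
The plan is to transplant the classical argument for Hilbert spaces — which is only sketched in the excerpt around equation \eqref{embedding} — to an arbitrary complete ${\rm CAT}(0)$ space $Y$, the point being that the only feature of $\mathcal{H}$ that argument really used is the variational characterization of $\lambda_1(G,Y)$ together with a convexity/parallelogram-type estimate that survives in the ${\rm CAT}(0)$ setting. Concretely, suppose $f_n:V_n\to Y$ is a uniform embedding with control functions $\rho_1\le\rho_2$, so $\rho_1(d_{G_n}(u,v))\le d_Y(f_n(u),f_n(v))\le\rho_2(d_{G_n}(u,v))$ for all $n$ and all $u,v\in V_n$. I would test the infimum defining $\lambda_1(G_n,Y)$ against $\phi=f_n$ itself (it is nonconstant once $|V_n|\ge 2$ and the graph has an edge), giving
\begin{equation*}
\lambda_1(G_n,Y)\le
\frac{\sum_{\{u,v\}\in E_n} d_Y(f_n(u),f_n(v))^2}{\sum_{v\in V_n}\mathrm{deg}(v)\, d_Y(f_n(v),\overline{f_n})^2}.
\end{equation*}

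First I would bound the numerator from above: each edge $\{u,v\}$ has $d_{G_n}(u,v)=1$, so $d_Y(f_n(u),f_n(v))\le\rho_2(1)$, and there are $|E_n|\le \tfrac{d}{2}|V_n|$ edges, whence the numerator is at most $\rho_2(1)^2|E_n|$. Next, and this is the substantive half, I would bound the denominator from below. Here I would use that the barycenter $\overline{f_n}$ of the weighted empirical measure minimizes $v\mapsto \sum_{v\in V_n}\mathrm{deg}(v)d_Y(f_n(v),\,\cdot\,)^2$, together with the ${\rm CAT}(0)$ "variance" inequality: for any finitely supported measure $\mu$ with barycenter $b$ and any $z\in Y$, $\int d_Y(\cdot,b)^2\,d\mu \ge \int d_Y(\cdot,z)^2\,d\mu - d_Y(z,b)^2 \cdot(\text{total mass})$... more usefully, the identity $\int d_Y(x,z)^2 d\mu(x) = \int d_Y(x,b)^2 d\mu(x) + d_Y(b,z)^2$ valid in Hilbert space is replaced by the ${\rm CAT}(0)$ inequality $\int d_Y(x,b)^2 d\mu(x) \ge \tfrac12\int\!\!\int d_Y(x,y)^2\,d\mu(x)d\mu(y)$ (this is exactly the convexity estimate that makes barycenters well-behaved). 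Applying it to $\mu=\sum_v\tfrac{\mathrm{deg}(v)}{2|E_n|}\mathrm{Dirac}_{f_n(v)}$ and rescaling by $2|E_n|$ gives
\begin{equation*}
\sum_{v\in V_n}\mathrm{deg}(v)\, d_Y(f_n(v),\overline{f_n})^2
\ \ge\ \frac{1}{4|E_n|}\sum_{u,v\in V_n}\mathrm{deg}(u)\mathrm{deg}(v)\, d_Y(f_n(u),f_n(v))^2.
\end{equation*}

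Now I would plug in the lower uniform-embedding bound $d_Y(f_n(u),f_n(v))\ge\rho_1(d_{G_n}(u,v))$ and observe that since the graphs have bounded degree $d$ and $|V_n|\to\infty$, a definite fraction of ordered pairs $(u,v)$ are at graph distance bounded below — more carefully, for any fixed radius $r$ the ball $B(u,r)$ has at most $1+d+\cdots+d^r$ vertices, so for $|V_n|$ large there are at least $\tfrac12|V_n|^2$ pairs with $d_{G_n}(u,v)> r$, hence with $d_Y(f_n(u),f_n(v))^2\ge\rho_1(r)^2$. Choosing $r$ so that $\rho_1(r)>0$ (possible since $\rho_1$ is unbounded and nondecreasing) yields a lower bound for the denominator of the form $c' |V_n|^2\rho_1(r)^2/|E_n| \ge c'' |V_n|$ for a constant independent of $n$. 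Combining, $\lambda_1(G_n,Y)\le \rho_2(1)^2|E_n|/(c''|V_n|)\le C/|V_n|\to 0$, so $\liminf_n\lambda_1(G_n,Y)=0$.

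The main obstacle is the denominator estimate: in Hilbert space one has the exact Pythagorean decomposition $\sum \mathrm{deg}(v)\|\phi(v)-\overline\phi\|^2 = \tfrac{1}{4|E|}\sum\mathrm{deg}(u)\mathrm{deg}(v)\|\phi(u)-\phi(v)\|^2$, but in a general ${\rm CAT}(0)$ space only the inequality "$\ge$" survives (this is precisely the Izeki–Nayatani philosophy of quantifying the defect by $\delta$, though here we only need the crude inequality $\delta\le 1$, i.e. the bare ${\rm CAT}(0)$ convexity). I would need to state this variance inequality cleanly, citing the standard reference \cite{BH} or \cite{IN}, and then the counting of distant pairs in a bounded-degree graph is elementary. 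Everything else is the routine bookkeeping indicated above.
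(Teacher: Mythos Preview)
Your overall strategy—plugging the uniform embedding $f_n$ into the Rayleigh quotient and bounding numerator and denominator separately—is sound and close in spirit to the paper's argument, but two steps need repair.

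First, the ``variance inequality'' you invoke has the wrong sign. In a ${\rm CAT}(0)$ space the barycenter inequality reads $\int d_Y(x,z)^2\,d\mu\ge\int d_Y(x,b)^2\,d\mu+d_Y(b,z)^2$ for every $z$; integrating in $z$ gives $\int d_Y(x,b)^2\,d\mu\le\tfrac12\iint d_Y(x,y)^2\,d\mu\,d\mu$, the \emph{opposite} of what you wrote (three leaves of a unit tripod already give $1<4/3$). What you actually need, and what is entirely sufficient, is the trivial metric bound $d_Y(x,y)^2\le 2d_Y(x,b)^2+2d_Y(y,b)^2$, which yields $\int d_Y(x,b)^2\,d\mu\ge\tfrac14\iint d_Y(x,y)^2\,d\mu\,d\mu$ in any metric space; ${\rm CAT}(0)$ enters only through the existence of the barycenter.

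Second, and more seriously, your final arithmetic does not give $\lambda_1(G_n,Y)\to 0$. With $r$ \emph{fixed} you get numerator $\lesssim |E_n|\asymp |V_n|$ and denominator $\gtrsim |V_n|^2/|E_n|\asymp |V_n|$, so the quotient is only bounded by a constant of order $d^2\rho_2(1)^2/\rho_1(r)^2$, not by $C/|V_n|$. The fix is to let $r=r_n\to\infty$ with $n$ (e.g.\ $r_n\sim\log_d|V_n|$, so that balls of radius $r_n$ still miss at least half the vertices); then $\rho_1(r_n)\to\infty$ and the bound tends to $0$.

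For comparison, the paper avoids the variance/pair-counting step altogether. It rewrites the Rayleigh inequality as $\sum_v d_Y(f_n(v),\overline{f_n})^2\le\lambda_1(G_n,Y)^{-1}\cdot|E_n|\,\rho_2(1)^2$ and applies a Markov/pigeonhole argument: at least $|V_n|/2$ vertices must land in a ball of radius $\asymp(d/\lambda_1(G_n,Y))^{1/2}\rho_2(1)$ around $\overline{f_n}$. If $\liminf\lambda_1>0$ this radius is uniformly bounded, contradicting the fact that the $f_n$-preimage of any fixed ball has cardinality at most $d^{1+\rho_1^{-1}(2r)}$. That route is a bit slicker; yours, once repaired, is more quantitative, actually producing a rate $\lambda_1(G_n,Y)\lesssim\rho_1(\log_d|V_n|)^{-2}$.
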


\begin{proof}
Let 
$\{ f_n :G_n \to Y\}$ be a uniform embedding and 
$\rho_1 ,\rho_2 :\lbrack 0,\infty )\to\lbrack 0,\infty )$ be unbounded non-decreasing functions such that 
$$
\rho_1 (d_{G_n}(x,x'))\le d_Y(f_n (x),f_n (x'))\le\rho_2 (d_{G_n}(x,x')), 
$$
for all $x,x'\in G_n$ and all $n$.  
Put $c=\rho_2 (1)$, then we have 
$d(f_n (v) ,f_n (w))\leq c$ for all $\{ v,w\}\in E_n$ and all $n$. 

First, observe that for any $r>0$, 
the preimage of an $r$-ball 
has diameter at most $\rho_1^{-1}(2r)$. 
Thus 
the preimage of any $r$-ball in $Y$ by any $f_n$ contains at most $d^{1+\rho_1^{-1}(2r)}$ vertices of $G_n$ 
since $G_n$ satisfies the property (2).

Next, 
by the definition of $\lambda_1 (G_n ,Y)$, if $\lambda_1 (G_n ,Y)>0$ we have 
$$
\sum_{v\in V_n}d_Y (f_n(v),\overline{f_n})^2
\le
\frac{1}{\lambda_1 (G_n ,Y)} \sum_{\{v,w\}\in E_n}
d_Y (f_n(v),f_n(w))^2 .
$$
The right-hand side is no greater than $\frac{1}{\lambda_1 (G_n ,Y)}\frac{d |V_n |}{2} c^2 $. 
Thus at least $\frac{|V_n |}{2}$ terms in the sum on the left-hand side 
are at most $\frac{1}{\lambda_1 (G_n ,Y)}c^2 d$. 
This means that 
the preimage of the ball of radius 
$\left(\frac{d}{\lambda_1 (G_n ,Y)}\right)^{\frac{1}{2}}c$
centered at $\overline{f_n}\in Y$ contains at least $\frac{|V_n |}{2}$ vertices of $G_n$. 

To argue by contradiction we assume that  
$$
\liminf_n \lambda_1(G_n ,Y) =\lambda >0. 
$$
Then there exists arbitrarily large $n$ such that $\lambda_1 (G_n ,Y)>\frac{\lambda}{2}$. 
Put $r=\left(\frac{2d}{\lambda}\right)^{\frac{1}{2}}c$. 
The above discussion implies that for such $n$ 
the preimage of the $r$-ball centered at $\overline{f_n}$ by $f_n$ 
contains at least $\frac{|V_n|}{2}$ vertices of $G_n$. 
But then $\frac{|V_n|}{2}\leq d^{1+\rho_1^{-1}(2r)}$, impossible. 
\end{proof}

Now, Corollary \ref{expander-ccc} follows immediately from \eqref{ccc-lambda1} and Theorem \ref{lambda1-th}.

\end{document}